\documentclass{amsart}
\usepackage[english]{babel}
\usepackage[utf8]{inputenc}
\usepackage{amsmath}
\usepackage{amsthm}
\usepackage{comment}

\usepackage{amssymb}
\usepackage{tikz-cd}
\usepackage{enumitem}
\usepackage[backref=page]{hyperref}
\usepackage{cleveref}
\usepackage{mathtools}
\usepgflibrary{bbox}

\newtheorem{thm}{Theorem}[section]
\newtheorem{prop}[thm]{Proposition}
\newtheorem{lemma}[thm]{Lemma}

\theoremstyle{definition}

\theoremstyle{remark}
\newtheorem{rmk}[thm]{Remark}
\numberwithin{equation}{section}

\newcommand{\Z}{\mathbb Z}

\newcommand{\Br}{\operatorname{Br}}
\newcommand{\NS}{\operatorname{NS}}
\newcommand{\im}{\operatorname{im}}
\newcommand{\coker}{\operatorname{coker}}
\renewcommand{\phi}{\varphi}

\title{On the crystalline Artin--Mumford invariant}
\author{Margherita Pagano, Domenico Valloni}
\date{}

\begin{document}
\maketitle

\begin{abstract}
Let $k$ be an algebraically closed field of characteristic $p>0$ and let
$W=W(k)$. We prove that the torsion subgroup of $H^3_{\mathrm{cris}}(X/W)$
is a birational invariant of smooth projective varieties over $k$. The proof combines the action of correspondences on
Hodge--Witt cohomology and on the first Nygaard piece of the de Rham--Witt complex.
\end{abstract}

\section{Introduction}

Let $k$ be an algebraically closed field of characteristic $p>0$, and let
$W=W(k)$ be the ring of Witt vectors of $k$. For a smooth projective variety
$X/k$, we write $H^i_{\mathrm{cris}}(X/W)$ for its $i$-th crystalline cohomology group.

The goal of this article is to prove that
$\operatorname{Tors}H^3_{\mathrm{cris}}(X/W)$ is a birational invariant of
smooth projective varieties over $k$. This is the crystalline analogue of
the Artin--Mumford invariant
$\operatorname{Tors}H^3_{\mathrm{sing}}(X,\mathbf Z)$ for a smooth projective variety $X$ defined over the complex numbers. Artin and Mumford used this invariant in
\cite{ArtinMumford} to construct unirational threefolds, which are not
rational. One can prove its birational invariance using the identification
$\operatorname{Tors}H^3_{\mathrm{sing}}(X,\mathbf Z)
\simeq\operatorname{Br}(X)/\operatorname{Br}(X)_{\mathrm{div}}$
and the birational invariance of the Brauer group
\cite{GrothendieckBrauerIII}, or by using weak factorization
\cite{AKMW} and the blow-up formula for cohomology. These arguments do not
directly apply to crystalline cohomology, in fact: the description through the
Brauer group does not recover crystalline torsion, and
weak factorization is not available in positive characteristic. In \cite{AmbrosiValloni}, Ambrosi and the second author used crystalline
torsion to study reduction modulo $p$ of Noether's problem. They did not prove its birational invariance, but established a vanishing
theorem for $H^3_{\mathrm{cris}}(X/W)[p]$ under hypotheses satisfied by
stably rational varieties. Ambrosi and Ancona subsequently used
crystalline torsion in \cite{AmbrosiAncona} to prove an Artin--Mumford
criterion for conic bundles in characteristic two. These works provided
obstructions to stable rationality, while the birational invariance of
$\operatorname{Tors}H^3_{\mathrm{cris}}(X/W)$ was left open. We prove the
following.

\begin{thm}\label{main thm}
Let $X$ and $Y$ be smooth projective varieties over $k$. If $X$ and $Y$ are
birational, then
\[
\operatorname{Tors}H^3_{\mathrm{cris}}(X/W)
\simeq
\operatorname{Tors}H^3_{\mathrm{cris}}(Y/W).
\]
\end{thm}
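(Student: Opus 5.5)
The plan is to use the graph correspondences of a birational map, following the strategy indicated in the abstract. Fix a birational map $f\colon X\dashrightarrow Y$ and let $\Gamma\subset X\times Y$ be the closure of its graph, a cycle of dimension $d=\dim X=\dim Y$. Correspondences act on crystalline cohomology through the crystalline cycle class and Poincaré duality, giving $\Gamma_*\colon H^3_{\mathrm{cris}}(X/W)\to H^3_{\mathrm{cris}}(Y/W)$ and ${}^t\Gamma_*$ in the other direction. Since $f$ is birational, ${}^t\Gamma\circ\Gamma=\Delta_X+Z$, where $Z$ is a cycle supported on $D\times D'$ with $D\subsetneq X$ a proper closed subset (Fulton's composition of correspondences for birational maps). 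The main claim I would aim for is that \emph{any correspondence supported over a proper closed subset $D\times X$ acts by zero on $\operatorname{Tors}H^3_{\mathrm{cris}}(X/W)$}. Granting this, ${}^t\Gamma_*\Gamma_*$ is the identity on torsion, and by symmetry so is $\Gamma_*{}^t\Gamma_*$ on $Y$. This gives the isomorphism.

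To prove the claim, I would first reduce, via alterations (de Jong), to the case where $Z$ factors through a smooth projective $\tilde D$ of dimension $d-1$ mapping to $D$. Pushforwards carry rational coefficient multiplicities, so the factorization has to be organized carefully; alternatively, by additivity, one reduces to $Z$ being the image of a cycle on $\tilde D\times X$. Then $Z_*$ factors as
\[
H^3_{\mathrm{cris}}(X/W)\xrightarrow{Z'_*} H^{1}_{\mathrm{cris}}(\tilde D/W)\xrightarrow{\iota_*} H^3_{\mathrm{cris}}(X/W),
\]
where the middle term carries a Tate twist and the Gysin map $\iota_*$ shifts degree by $2$. The point is that the action on torsion passes through $H^1$ of a smooth projective variety, whose crystalline cohomology is torsion-free. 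The torsion of $H^3$ is therefore killed.

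The main obstacle is that this naive argument forgets Frobenius and is not literally correct as stated: the twist means the maps involve $H^1(\tilde D)$ in a way compatible only up to powers of $p$. Worse, the reduction to a smooth $\tilde D$ via alterations introduces degrees divisible by $p$, so one only gets that $p^N$ times the torsion is killed. This is exactly where the abstract's tools enter. The prime-to-$p$ part of the degrees is harmless, and the delicate issue is $p$-torsion. I would use the slope spectral sequence and the decomposition of $p$-torsion of $H^3_{\mathrm{cris}}$ into contributions from $H^j(X,W\Omega^i)$ (Illusie). Each piece is acted on by correspondences (Chatzistamatiou–Rülling's action of correspondences on Hodge–Witt cohomology, which is compatible with birational maps, and whose $H^j(W\mathcal O)$ and $H^0(W\Omega^i)$ parts are birational invariants by their work). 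The remaining piece, $H^1(W\Omega^1)$ together with its torsion, is where I would use the first Nygaard piece $N^1W\Omega^\bullet$. There I would check that correspondences supported on divisors act compatibly on $H^3(N^1W\Omega^\bullet)$ and factor through $H^1$-type groups of $\tilde D$ without losing powers of $p$, since the Tate twist by one is absorbed by the Nygaard filtration ($N^1$ is the divided-Frobenius piece). The final step is a diagram chase showing that $\operatorname{Tors}H^3_{\mathrm{cris}}$ is controlled by these two functorial pieces, so that the identity-plus-degenerate decomposition gives the isomorphism.
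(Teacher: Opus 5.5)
The gap is your central claim that a correspondence supported on a degenerate locus kills $\operatorname{Tors}H^3_{\mathrm{cris}}(X/W)$. The proposal does not prove it in arbitrary dimension.

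\textbf{Why the claim is not established.} All torsion in $H^3_{\mathrm{cris}}(X/W)$ is $p$-primary, so ``the prime-to-$p$ part of the degrees is harmless'' is vacuous: the entire difficulty is the $p$-part. To factor $Z_*$ through $H^1_{\mathrm{cris}}(\tilde D'/W)$, you must write $Z$ as a pushforward of a cycle on $X\times\tilde D'$. Here $\tilde D'$ must be smooth projective and map birationally (or with degree prime to $p$) onto the relevant closed subsets. In other words, you need resolution of singularities. Alterations only give $mZ_*=0$ on torsion for some $m$ that may be divisible by $p$, which yields nothing.

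\textbf{The Frobenius worry is not the issue.} Cycle classes and Gysin maps are integral and $W$-linear. So whenever resolution is available (e.g.\ $\dim X\le 3$), your naive argument works as written. One caveat: you must use support in the \emph{target} factor. With $\Gamma_*=p_{2*}(p_1^*(-)\cup\operatorname{cl}(\Gamma))$, a cycle on $D\times X$ makes $Z_*$ factor through $H^3_{\mathrm{cris}}(\tilde D/W)$, not $H^1$.

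\textbf{The proposed repair does not help.} The Nygaard filtration absorbs a Frobenius twist, not the degree of an alteration. Factoring through ``$H^1$-type groups of $\tilde D$ without losing powers of $p$'' still presupposes the same unavailable smooth $\tilde D$. The slope-spectral-sequence bookkeeping and the final diagram chase are also left unspecified.

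\textbf{How the paper avoids this.} The paper never needs degenerate correspondences to vanish on $H^3$. It uses only birational invariants known without resolution: $H^i(\mathcal O)$ and $H^0(\Omega^i)$ (via Chatzistamatiou--R\"ulling), $H^1_{\mathrm{cris}}$, and $\Br$. It reduces to injectivity of $\gamma$ on $H^3_{\mathrm{cris}}[p]$, using the universal coefficient sequence $0\to H^2_{\mathrm{cris}}/p\to H^2_{\mathrm{dR}}\to H^3_{\mathrm{cris}}[p]\to0$ and the snake lemma. This requires $\ker\alpha\to\ker\beta$ surjective and $\coker\alpha\to\coker\beta$ injective.
\begin{itemize}
\item For the kernels, the conjugate spectral sequence, Cartier and semilinear algebra identify $\ker\beta$ with the kernel on $H^1(-,\Omega^1_{\log})$ tensored with $k$. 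Kummer and the birational invariance of $\Br$ then show it is spanned by classes from $\NS(X)/p$, which lift to $H^2_{\mathrm{cris}}$.
\item For the cokernels, the first Nygaard piece and the sequence $0\to W\Omega^1_{\log}[-1]\to\mathcal N^{\geq1}W\Omega^\bullet\to W\Omega^\bullet\to0$ give $\mathcal C_{\mathrm{syn}}\otimes_{\mathbf Z_p}W\simeq\mathcal C_{\mathrm{cris}}$. Hence $\coker\alpha$ is the logarithmic cokernel tensored with $k$, which injects into $\coker\beta$.
\end{itemize}
Injectivity on $p$-torsion in both directions passes to all torsion, and equality of lengths gives the isomorphism. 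This never proves ${}^t\Gamma_*\circ\Gamma_*=\mathrm{id}$ on torsion, which is the stronger statement your route aims at.
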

\subsection{Strategy of the proof}
Let $f:X\dashrightarrow Y$ be a birational map. The universal coefficient sequence
for crystalline cohomology gives a commutative diagram with exact rows
\begin{equation}\label{diagram: universal coefficient}
\begin{tikzcd}
0 \arrow[r]
& H^2_{\mathrm{cris}}(X/W)/p \arrow[r] \arrow[d,"\alpha"]
& H^2_{\mathrm{dR}}(X/k) \arrow[r] \arrow[d,"\beta"]
& H^3_{\mathrm{cris}}(X/W)[p] \arrow[r] \arrow[d,"\gamma"]
& 0 \\
0 \arrow[r]
& H^2_{\mathrm{cris}}(Y/W)/p \arrow[r]
& H^2_{\mathrm{dR}}(Y/k) \arrow[r]
& H^3_{\mathrm{cris}}(Y/W)[p] \arrow[r]
& 0.
\end{tikzcd}
\end{equation}
Here $\alpha$, $\beta$, and $\gamma$ are the maps induced by the closure of the graph of $f$; the compatibility among these maps is discussed in Section \ref{sec: correspondences}. We are going to prove that $\gamma$
is injective, and then apply the same argument to the transpose; see
Section~\ref{sec: final proof}. The snake lemma gives an exact sequence
\[
\begin{tikzcd}[
  /pgf/bezier bounding box=true,
  column sep=2em,
  row sep=3em
]
0 \arrow[r]
  & \ker\alpha \arrow[r]
  & \ker\beta \arrow[r]
  & \ker\gamma
    \arrow[dlll, out=0, in=180, looseness=2,
           "\delta" description] \\
\operatorname{coker}\alpha \arrow[r]
  & \operatorname{coker}\beta \arrow[r]
  & \operatorname{coker}\gamma \arrow[r]
  & 0.
\end{tikzcd}
\]

Thus, the injectivity of $\gamma$ follows from the following two facts
\begin{equation}\label{eq: two comparisons introduction}
\ker(\alpha)\longrightarrow\ker(\beta)\text{ is surjective},
\qquad
\operatorname{coker}(\alpha)\longrightarrow\operatorname{coker}(\beta)\text{ is injective}.
\end{equation}
We shall prove both statements. Our proof uses in a crucial way the action of correspondences on
Hodge--Witt cohomology constructed by Chatzistamatiou and R\"ulling
\cite{CR11,CR12}, together with the birational invariance of the
structure-sheaf cohomology proved by the same authors.

Concerning de Rham cohomology, the birational invariance of the cohomology of the structure sheaf with some $p$-linear algebra arguments allow us to prove the following.

\begin{prop}\label{prop: logarithmic description de Rham}
Let $f\colon X \dashrightarrow Y$ be a birational morphism between smooth projective variety over an algebraically closed field $k$. Then, there is an isomorphism
\[
\ker(\beta)\simeq
\ker\bigl(H^1(X,\Omega_{X,\log}^1)\to
H^1(Y,\Omega_{Y,\log}^1)\bigr)\otimes_{\mathbf F_p}k,
\]
and an injection
\[
\operatorname{coker}\bigl(H^1(X,\Omega_{X,\log}^1)\to
H^1(Y,\Omega_{Y,\log}^1)\bigr)\otimes_{\mathbf F_p}k
\hookrightarrow\operatorname{coker}(\beta).
\]
\end{prop}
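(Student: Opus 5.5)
We use the Hodge filtration on $H^2_{\mathrm{dR}}$ together with the birational invariance of $H^i(X,\mathcal O_X)$ (Chatzistamatiou--R\"ulling). Since $H^0(\Omega^2)$, $H^1(\Omega^1)$ and $H^2(\mathcal O)$ are not all controlled by birational invariance, we work with the conjugate spectral sequence instead. The Cartier isomorphism identifies its $E_2$ terms with $H^a(X,\mathcal H^b(\Omega^\bullet_X))\simeq H^a(X,\Omega^b_X)^{(p)}$, and this gives a filtration
\[
0\subset F_0 \subset F_1\subset F_2=H^2_{\mathrm{dR}}(X/k).
\]
Here $F_0$ is a quotient of $H^2(X,\mathcal O_X)^{(p)}$, coming from $H^2(X,\mathcal H^0)$ with $\mathcal H^0=\mathcal O_X^p$. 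The graded piece $F_1/F_0$ is a subquotient of $H^1(X,\mathcal H^1)$, and $F_2/F_1$ embeds into $H^0(X,\mathcal H^2)\subset H^0(X,Z\Omega^2_X)$.

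All maps are induced by the graph correspondence, and these actions respect the filtrations; I will use the compatibility of Section~\ref{sec: correspondences}. The top piece $H^0(X,\Omega^2_X)$ is a birational invariant because $H^0$ of differential forms extends over codimension-two loci, so the map is an isomorphism there. The bottom pieces involve $\mathcal O$-cohomology and Cartier-type maps, hence are also isomorphisms. One subtlety is that $F_0$ is only a quotient of $H^2(\mathcal O)^{(p)}$, so we must also control the differentials $d_2\colon H^0(\mathcal H^1)\to H^2(\mathcal H^0)$. These differentials are compatible with correspondences, and their sources, $H^0(\Omega^1)^{(p)}$, are birationally invariant. Hence the induced maps on $E_\infty$ in the outer positions are isomorphisms.

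A snake-lemma argument on the filtration then reduces $\ker\beta$ and $\coker\beta$ to the kernel and cokernel on the middle graded piece, which we relate to $H^1(X,\mathcal H^1)$. The short exact sequence
\[
0\to \Omega^1_{\log}\to Z\Omega^1\xrightarrow{1-C}\Omega^1\to 0
\]
in the étale topology, together with $\mathcal H^1=Z\Omega^1/B\Omega^1$, describes $H^1(X,\mathcal H^1)$ through $p$-linear algebra. For $H^1$, the map $H^1(X,\Omega^1_{\log})\otimes k\to H^1(X,Z\Omega^1)$ is injective, and its image is the "unit-root/fixed" part under $C$. The remaining contributions come from $H^1(B\Omega^1)$ and from nilpotent parts, and these are controlled by $\mathcal O$-cohomology: via $d\colon\mathcal O\to B\Omega^1$ with kernel $\mathcal O^p$, they come from $H^1(\mathcal O)$ and $H^2(\mathcal O)$. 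They are therefore again birationally invariant.

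We then use the standard fact that for a finite-dimensional $k$-space $V$ with a bijective $p^{-1}$-linear endomorphism, the fixed points $V^{C=1}$ form an $\mathbf F_p$-form of $V$. Since $k$ is algebraically closed and $\otimes_{\mathbf F_p}k$ is exact, this gives
\[
\ker\beta\simeq\ker\bigl(H^1(\Omega^1_{\log})\bigr)\otimes k
\]
exactly. On cokernels we only get an injection, because the extra pieces can contribute further cokernel.

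The main obstacle is the middle step. We must show that the map on $F_1/F_0$ has kernel exactly the log part tensored with $k$; this needs the non-log (nilpotent, $B\Omega$) contributions to be isomorphisms under the correspondence. It also requires handling the $d_2$ differentials carefully so that the kernels do not leak between graded pieces. I would first try to phrase everything via the slope or unit-root decomposition of $H^1(X,Z\Omega^1)$ under $C$, and to use functoriality of that decomposition under correspondences.
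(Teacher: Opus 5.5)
Your reduction to $H^1(\Omega^1)$ is correct and is the paper's first step. The sequence $E_2^{2,0}\to\ker(H^2_{\mathrm{dR}}\to E_2^{0,2})\to E_2^{1,1}\to E_2^{3,0}$ of the conjugate spectral sequence uses the birational invariance of $H^2(\mathcal O)$ and $H^3(\mathcal O)$; the latter is what controls the $d_2$ leaving $E_2^{1,1}$. Together with the invariance of $H^0(\Omega^2)$ and the transpose, this identifies $\ker\beta$ and $\coker\beta$ semilinearly with the kernel $K$ and cokernel $\mathcal C$ of $H^1(X,\Omega^1_X)\to H^1(Y,\Omega^1_Y)$.

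The genuine gap is the step you flag as the main obstacle, which is the actual content of the proposition, and the tool you propose for it does not exist. There is no unit-root decomposition of $H^1(X,Z^1_X)$ under $C$. The logarithmic sequence only provides two maps $\iota, C\colon H^1(Z^1)\to H^1(\Omega^1)$, neither bijective in general, so there is no semilinear endomorphism to decompose. Moreover, $H^1(X,\Omega^1_{X,\log})\otimes_{\mathbf F_p}k\to H^1(X,Z^1_X)$ is not injective in general, for two reasons:
\begin{itemize}
\item the map $H^1(\Omega^1_{\log})\to H^1(Z^1)$ has kernel $\coker\bigl(\iota-C\colon H^0(Z^1)\to H^0(\Omega^1)\bigr)$, which is nonzero when $X$ carries non-closed global $1$-forms;
\item $H^1(\Omega^1_{\log})\simeq H^2_{\mathrm{fppf}}(X,\mu_p)$ surjects onto $\Br(X)[p]$, which can be infinite (e.g.\ $\Br(X)\simeq k$ for a supersingular K3 surface).
\end{itemize}
So the logarithmic cohomology is not an $\mathbf F_p$-form of a piece of $H^1(Z^1)$. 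There is no splitting of the full groups into a log part and a birationally invariant remainder.

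What is missing is to do the $p$-linear algebra only on kernels and cokernels, where the two maps become bijective. Set $K'=\ker\bigl(H^1(X,Z^1_X)\to H^1(Y,Z^1_Y)\bigr)$ and let $\mathcal C'$ be the cokernel.
\begin{enumerate}
\item The birational invariance of $H^i(B^1)$ follows from $0\to\mathcal O\xrightarrow{F}\mathcal O\xrightarrow{d}B^1\to0$, as you indicate. Applied to $0\to B^1\to Z^1\xrightarrow{C}\Omega^1\to0$ together with the transpose, it gives $C\colon K'\simeq K$ and $C\colon\mathcal C'\simeq\mathcal C$.
\item The sequence $0\to Z^1\to\Omega^1\xrightarrow{d}B^2\to0$ and the invariance of $H^0(\Omega^1)$ and $H^0(B^2)$ show that $\iota\colon K'\to K$ is injective, hence bijective by dimension.
\item Then $\varphi=\iota\circ C^{-1}$ is a bijective $\sigma$-semilinear endomorphism of $K$, and some semilinear endomorphism of $\mathcal C$. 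The Lang-type lemma gives $K^{\varphi=1}\otimes_{\mathbf F_p}k\simeq K$, surjectivity of $\varphi-1$ on $K$, and an injection $\mathcal C^{\varphi=1}\otimes_{\mathbf F_p}k\hookrightarrow\mathcal C$.
\item Write $j_T\colon H^1(T,\Omega^1_{T,\log})\to H^1(T,Z^1_T)$. Its kernel is the cokernel of $\iota-C$ on global sections, hence birationally invariant. A snake argument on $0\to\im(j_T)\to H^1(Z^1_T)\xrightarrow{\iota-C}H^1(\Omega^1_T)$, using the surjectivity of $\iota-C=(\varphi-1)\circ C\colon K'\to K$, identifies the logarithmic kernel with $K^{\varphi=1}$. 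It also embeds the logarithmic cokernel into $\ker(\iota-C\colon\mathcal C'\to\mathcal C)\simeq\mathcal C^{\varphi=1}$.
\end{enumerate}
Only at this level does the possibly infinite Brauer-type part of the logarithmic cohomology cancel. Without this passage to kernels and cokernels, your proposal proves neither assertion.
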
 
The Kummer sequence and the birational invariance
of the Brauer group then show that the kernel is generated by divisor
classes. Since these classes lift to crystalline cohomology, they give
the required surjection at the level of the kernels. 

Concerning crystalline cohomology, we use the first
piece of the Nygaard filtration \cite{BMS2} to obtain a
comparison between $\mathcal{C}_{\mathrm{cris}} \coloneqq \coker(H^2_{\mathrm{cris}}(X/W)\rightarrow H^2_{\mathrm{cris}}(Y/W))$ and weight-one flat (i.e. syntomic) cohomology. 
\begin{prop}\label{prop: intro cokernel comparison}
Let $f\colon X \dashrightarrow Y$ be a birational morphism between smooth projective varieties over an algebraically closed field $k$. There is an isomorphism
\begin{equation}\label{eq: integral crystalline cokernel syntomic}
\coker\left(H^2(X,\Z_p(1))\rightarrow H^2(Y,\Z_p(1))\right)\otimes_{\mathbf Z_p}W
\xrightarrow{\simeq}   \mathcal{C}_{\mathrm{cris}}.
\end{equation}
\end{prop}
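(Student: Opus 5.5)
We compare $H^2_{\mathrm{cris}}$ with weight-one syntomic cohomology through the first Nygaard piece. Write $\mathcal N^{\ge 1}H^2$ for $H^2(X,\mathcal N^{\ge1}W\Omega^\bullet_X)$. Using the description of \cite{BMS2}, we have an exact triangle
\[
R\Gamma(X,\Z_p(1))\to R\Gamma(X,\mathcal N^{\ge1}W\Omega^\bullet_X)\xrightarrow{\phi_1-1}R\Gamma_{\mathrm{cris}}(X/W),
\]
where $\phi_1=\phi/p$ is the divided Frobenius. We also have the triangle
\[
\mathcal N^{\ge1}W\Omega^\bullet_X\to W\Omega^\bullet_X\to W\mathcal O_X.
\]
Both triangles are functorial for the action of correspondences of Section \ref{sec: correspondences}, so the closure of the graph of $f$ acts compatibly on all terms.

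\textbf{Step 1: crystalline and Nygaard cokernels agree.} Since $H^i(X,W\mathcal O_X)\to H^i(Y,W\mathcal O_Y)$ is an isomorphism for all $i$, by the birational invariance of Chatzistamatiou--R\"ulling, the five lemma applied to the long exact sequence of the second triangle gives two things. First, $\mathcal N^{\ge1}H^2\to H^2_{\mathrm{cris}}$ induces an isomorphism on cokernels of the maps $X\to Y$. Second, it induces a surjection on kernels. A little care is needed at $H^1(W\mathcal O)\to \mathcal N^{\ge1}H^2$, but the four-term diagram chase only needs the maps on $H^1(W\mathcal O)$ and $H^2(W\mathcal O)$ to be isomorphisms.

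\textbf{Step 2: the Nygaard cokernel is a $\phi_1$-module.} Set $C=\coker(\mathcal N^{\ge1}H^2(X)\to \mathcal N^{\ge1}H^2(Y))$, which by Step 1 is isomorphic to $\mathcal C_{\mathrm{cris}}$. The map $\phi_1$ induces a $\sigma$-semilinear map $C\to \mathcal C_{\mathrm{cris}}\simeq C$. I expect the key claim to be that this map is \emph{bijective}. The reason is that $\phi_1$ is an isomorphism ``in weight one'' up to the $W\mathcal O$ part, which is exactly the part killed by Step 1. Concretely, the cokernel of $\phi_1:\mathcal N^{\ge1}W\Omega^\bullet\to W\Omega^\bullet$ is controlled by $W\mathcal O$-cohomology. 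I would try to check this on the graded pieces $W\Omega^{\ge1}$ via the Cartier isomorphism and Ekedahl/Illusie's description of $\mathcal N^{\ge1}$.

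\textbf{Step 3: Dieudonné--Manin style descent.} Once Step 2 is known, the long exact sequence of the first triangle gives
\[
H^2(X,\Z_p(1))=\ker(\phi_1-1\mid \mathcal N^{\ge1}H^2)
\]
up to the contribution of $\coker(\phi_1-1)$ on $H^1$. Over an algebraically closed $k$, $\phi_1-1$ is surjective on finitely generated $W$-modules with a bijective $\sigma$-linear operator, by the Lang-type argument. For such a module $M$ one also has $M^{\phi_1=1}\otimes_{\Z_p}W\simeq M$. Applying this to $C$ and comparing with the images gives the isomorphism \eqref{eq: integral crystalline cokernel syntomic}. The torsion in $C$ is handled by dévissage on $p^n$-torsion, where the same statement holds for $k$-vector spaces with bijective $p$-linear maps.

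\textbf{Main obstacle.} I expect the hardest points to be the bijectivity of $\phi_1$ on $C$ in Step 2 and the exactness bookkeeping needed to pass from $\ker(\phi_1-1)$ on $\mathcal N^{\ge1}H^2$ to fixed points on the cokernel. For the latter I would use surjectivity of $\phi_1-1$ on the relevant $H^1$ terms, again via Lang's theorem, so that taking fixed points is exact on the sequence $\mathcal N^{\ge1}H^2(X)\to\mathcal N^{\ge1}H^2(Y)\to C\to 0$.
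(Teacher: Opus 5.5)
Your outline is the paper's. You compare the graph's action on $\mathbb H^2(\mathcal N_T)$ (with $\mathcal N_T=\mathcal N^{\geq1}W\Omega^\bullet_T$) and on $H^2_{\mathrm{cris}}$ through both $\iota$ and $\varphi_{(1)}$, descend along the semilinear automorphism $\varphi_{(1)}\circ\iota^{-1}$, and identify the fixed part with the syntomic cokernel. But the two places that need real input are exactly the ones you leave open, or propose to settle by a mechanism that does not work.

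\textbf{Step 2.} This is the heart of the argument; it is unproved, and your heuristic for it is wrong. First, $W\Omega^\bullet_X/\mathcal N_X=\mathcal O_X[0]$, not $W\mathcal O_X$. This is harmless in Step 1, since $H^i(\mathcal O)$ is also a birational invariant. However, the cokernel isomorphism there needs the transpose correspondence: a four-term chase with isomorphisms on the outer terms only gives surjectivity on kernels and injectivity on cokernels. More seriously, the cokernel $Q_1$ of $\varphi_{(1)}\colon\mathcal N_X\to W\Omega^\bullet_X$ has nothing to do with $W\mathcal O$. In degree $0$ the map $VW\mathcal O_X\to W\mathcal O_X$ is bijective, and $Q_1^j=W\Omega^j_X/p^{j-1}FW\Omega^j_X$ for $j\geq1$. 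What is needed is:
\begin{enumerate}
\item[(a)] $\mathcal H^j(Q_1)=0$ for $j\leq1$. This follows from Illusie's criterion $dx\in pW\Omega^2\Rightarrow x\in FW\Omega^1$, and it makes $\varphi_{(1)}\colon\mathbb H^2(\mathcal N_T)\to H^2_{\mathrm{cris}}(T/W)$ injective.
\item[(b)] An injection $\mathbb H^2(T,Q_1)\hookrightarrow H^0(T,\mathcal H^2(\Omega^\bullet_T))\simeq H^0(T,\Omega^2_T)$ (semilinearly). It comes from comparing $\varphi_{(1)}$ with $p\varphi_{(2)}$ on $\mathcal N^{\geq2}$, and using that $\varphi_{(1)}\bmod p$ on $\mathcal N^{\geq1}/\mathcal N^{\geq2}$ is the map $\tau^{\leq1}\Omega^\bullet_T\to\Omega^\bullet_T$.
\end{enumerate}
Birational invariance of global $2$-forms then makes $\varphi_{(1)}$ bijective on both kernels and cokernels. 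You need the kernels too, to get surjectivity of $\iota-\varphi_{(1)}$ on them in Step 3. None of (a), (b), or the role of $H^0(\Omega^2)$ appears in your proposal.

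\textbf{Step 3.} The exactness bookkeeping cannot be done by Lang's theorem. Your ``fixed points'' are the kernel of $\iota-\varphi_{(1)}\colon\mathbb H^2(\mathcal N_T)\to H^2_{\mathrm{cris}}(T/W)$. This is a map between different modules, along which $\iota$ is not bijective. The corresponding ``$H^1$'' is the cokernel of this map; it embeds into $H^3(T,\Z_p(1))_{\mathrm{tors}}$ and need not vanish.

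Once $\iota-\varphi_{(1)}$ is onto on kernels, the kernel of $H^2_{\mathrm{cris}}(X/W)\to H^2_{\mathrm{cris}}(Y/W)$ lies in the image of $\iota-\varphi_{(1)}$ on $\mathbb H^2(\mathcal N_X)$. A snake-lemma computation then shows the following. The cokernel of the map from $\coker(H^2(X,\Z_p(1))\to H^2(Y,\Z_p(1)))$ to the kernel of $\iota-\varphi_{(1)}$ on your $C$ is exactly the kernel of the induced map $\coker(\iota-\varphi_{(1)})_X\to\coker(\iota-\varphi_{(1)})_Y$. So an injectivity statement for $H^3(-,\Z_p(1))_{\mathrm{tors}}$ is indispensable. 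The paper gets it from $H^3(T,\Z_p(1))_{\mathrm{tors}}\simeq\Br(T)[p^\infty]/\Br(T)[p^\infty]_{\mathrm{div}}$ together with birational invariance of the Brauer group.

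Likewise, at the left end Lang's theorem does not apply to $\iota-\varphi_{(1)}\colon\mathbb H^1(\mathcal N_T)\to H^1_{\mathrm{cris}}(T/W)$, because $\iota$ there has cokernel inside $H^1(T,\mathcal O_T)$. What you actually need is only that $\ker(H^2(X,\Z_p(1))\to\mathbb H^2(\mathcal N_X))$ surjects onto the corresponding kernel for $Y$. This follows from birational invariance of $H^1_{\mathrm{cris}}$.
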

Reducing modulo $p$ gives an isomorphism \[
\coker(\alpha)\simeq \operatorname{coker}\bigl(H^1(X,\Omega_{X,\log}^1)\to
H^1(Y,\Omega_{Y,\log}^1)\bigr)\otimes_{\mathbf F_p}k
\]
which combined with Proposition \ref{prop: logarithmic description de Rham} gives the desired injectivity. 

Applying these comparisons to the graph of a birational map and its
transpose proves injectivity on $H^3_{\mathrm{cris}}(-)[p]$ in both
directions. The same maps are then injective on the full torsion
subgroups, since any non-zero kernel would contain an element killed
by $p$. The theorem follows because these torsion subgroups have finite
length over $W$.

\subsection*{Previous works.} In \cite{AmbrosiValloni}, the second named author together with Ambrosi used $\operatorname{Tors}H^3_{\mathrm{cris}}(-/W)$ to show that for a smooth and proper scheme $\mathcal{X}$ over a complete discrete valuation ring $R$ of mixed characteristic $(0,p)$, the rationality of the special fibre gives torsion freeness of the third étale cohomology group with $\Z_p$ coefficients on the generic fibre. The core of their work consists in showing that smooth and proper rational varieties in positive characteristic have torsion-free third crystalline cohomology group. To conclude, they use \cite{BMS1}, to compare the torsion in the étale cohomology of the generic fibre with the torsion in the crystalline cohomology of the special fibre. 

In \cite[Section 2]{BMS1}, the authors give an example of a smooth and proper surface $\mathcal{X}$ over a complete discrete valuation ring $R$ of mixed characteristic $(0,p)$, with residue field $k$ and fraction field $K$, for which $H^2_{\text{ét}}(\mathcal{X}_K,\Z_p)$ is torsion-free, while $H^2_{\mathrm{cris}}(\mathcal{X}_k,W)$ has nontrivial torsion. Poincaré duality implies the same statement at the level of the third (étale and crystalline) cohomology group, showing that in the mixed characteristic setting, the third crystalline torsion might be strictly larger than the torsion in the third étale cohomology of the generic fibre. 

Finally, in \cite{AmbrosiAncona} Ambrosi and Ancona use the torsion in the third crystalline cohomology group to give an Artin-Mumford criterion for conic bundles in characteristic $2$, by building non-zero $2$-torsion elements in the third crystalline cohomology group. Moreover, in analogy with the Artin--Mumford invariant, they show that also the presence of torsion in $H^3_{\mathrm{cris}}(-)$ gives an obstruction to the decomposition of the diagonal, see \cite[Proposition~7.6]{AmbrosiAncona}.
\subsection*{Outline of the paper} In Section \ref{sec: correspondences}, we give a brief overview on the action of correspondences on de Rham--Witt and crystalline cohomology, discussing compatibility with all the maps that we will need to prove our main result. In Section \ref{sec: kernel}, we give a proof of Proposition \ref{prop: logarithmic description de Rham}, while in Section \ref{sec: nygaard} we deal with the proof of Proposition \ref{prop: intro cokernel comparison}. Finally, in Section \ref{sec: final proof} we put everything together to prove Theorem \ref{main thm}. We conclude the paper with an appendix in which we collect some technical results used in the text.

\subsection*{Acknowledgment}
The first named author was founded by Imperial College London and ICERM (Brown University) during various stages of this project. Part of this material is based upon work supported by the National Science Foundation under Grant No. DMS-2424556 while the author was in residence at the Institute for Computational and Experimental Research in Mathematics in Providence, RI, during the Computations on K3 Surfaces and Related Varieties program.

AI (ChatGPT Sol) checked for typos, and helped with editing and references. It also spotted a small imprecision in the proof of Lemma~\ref{lemma: divided Frobenius low degrees} which was then easily corrected. 
\section{Correspondences and cohomology}\label{sec: correspondences}

We collect the correspondence formalism used in the proof. Our conventions
follow Chatzistamatiou--R\"ulling \cite{CR11,CR12}.

\subsection{Cycle classes and the correspondence action}

Let $X$ and $Y$ be smooth projective $k$-varieties, with $X$ of pure
dimension $d$, and let
\[
\Gamma\in \operatorname{CH}^{d+r}(X\times Y).
\]
For a Weil cohomology theory $H^*(-)$, the correspondence $\Gamma$ gives an action, the so-called \emph{action of correspondence}, on cohomology. More precisely, writing $p_1,p_2$ for the two projections, the usual correspondence formula is
\begin{equation}\label{eq: correspondence formula}
\mathrm{H}^i(X)\rightarrow \mathrm{H}^{i+2r}(Y), \quad \xi\mapsto\Gamma_*(\xi):=p_{2*}\bigl(p_1^*\xi\cup\operatorname{cl}(\Gamma)\bigr).
\end{equation}
Even if crystalline cohomology is not a Weil cohomology theory, because it does not have coefficients in a field,  the same procedure gives an action of correspondences on it. In fact, the crystalline cycle classes and the corresponding Gysin maps and proper
pushforwards were constructed by Berthelot in the smooth case and then by Gros in general; see
\cite{Berthelot,Gros}. 

Chatzistamatiou and R\"ulling construct the action of proper correspondences
on relative Hodge--Witt cohomology in \cite[Section~3]{CR12}, using the notion of weak cohomology theory with support, introduced in \cite{CR11}. At finite Witt
level, the derived correspondence morphism is described in
\cite[Sections~5.1.5--5.1.9]{CR12}: it is obtained from pullback, cup product
with the cycle class, and proper pushforward, exactly as in
\eqref{eq: correspondence formula}. In particular, a correspondence of degree
$r$ induces maps
\begin{equation}\label{eq: Witt vectors correspondence}
    H^q(X,W_n\Omega_X^j)\longrightarrow
    H^{q+r}(Y,W_n\Omega_Y^{j+r}).
\end{equation}
For $n=1$, this agrees with the action on Hodge cohomology constructed in
\cite{CR11}; see also \cite[Lemma~5.1.9]{CR12}.

From now on, we use degree-zero correspondences.  
In \cite{CR12}, they show that the action of correspondence is compatible with $F,V,d$. Moreover, it recover the usual crystalline correspondence action via the isomorphism
\[
\mathbb H^m(X,W\Omega_X^\bullet)\simeq \mathrm{H}^m_{\mathrm{cris}}(X/W)
\]
proven in \cite[II, Theorem~1.4]{Illusie}. 
In \cite{Gros}, the author proves the compatibility of cycle classes and
Gysin maps, see \cite[II, Sections~3.4 and~4.1]{Gros}. The compatibility
of traces and duality can be found in \cite[I, Section~5; III, Proposition~5.6]{Ekedahl}.

\subsection{Compatibility with closed, exact and logarithmic forms}
To avoid confusion, we will use $T$ to denote any smooth and proper variety over $k$. We define
\[
Z_T^i:=\ker(d:\Omega_T^i\rightarrow\Omega_T^{i+1}),
\qquad
B_T^i:=\operatorname{im}(d:\Omega_T^{i-1}\rightarrow\Omega_T^i).
\]
The ordinary Cartier operator gives an exact sequence
\begin{equation}\label{eq: cartier operator}
0\longrightarrow B_T^1\longrightarrow Z_T^1
\xrightarrow{C}\Omega_T^1\longrightarrow0.
\end{equation}
The map $C$ is $\sigma^{-1}$-semilinear, where
$\sigma(a)=a^p$. We also have, on the \'etale site, the logarithmic Cartier
sequence
\begin{equation}\label{eq: logarithmic forms}
0\longrightarrow\Omega_{T,\log}^1\longrightarrow Z_T^1
\xrightarrow{\iota-C}\Omega_T^1\longrightarrow0,
\end{equation}
where $\iota$ is the natural inclusion. We refer to
\cite{Illusie,IllusieRaynaud}; see also
\cite[Theorem~1.1.3 and Proposition~1.1.7(iii)]{Zhao} for these two
sequences. Cohomology of logarithmic forms is taken on the \'etale site;
for the other sheaves we use the equivalent Zariski cohomology.

The compatibility of the action of the correspondence on the de Rham--Witt complex with $F,V$ and $d$ induces compatible maps on the long exact sequence in cohomology arising from both the Cartier short exact sequences \eqref{eq: cartier operator} and the logarithmic Cartier sequence \eqref{eq: logarithmic forms}. In particular, the action of correspondence is compatible with the differentials appearing in the conjugate spectral sequence, see Section \ref{sec: kernel}.

\subsection{Compatibility with the Nygaard filtration}

The first Nygaard subcomplex is
\[
\mathcal N^{\geq1}W\Omega_T^\bullet
=
\bigl[VW\mathcal O_T\longrightarrow W\Omega_T^1
\longrightarrow W\Omega_T^2\longrightarrow\cdots\bigr], 
\]
see \cite[Definition~8.1]{BMS2}. We use the
correspondence maps on this complex before taking hypercohomology.
For clarity, after identifying $VW\mathcal O_T$ with $W\mathcal O_T$
through $V$, its additive complex is
$[W\mathcal O_T\xrightarrow{dV}W\Omega_T^1\xrightarrow{d}\cdots]$,
with the corresponding Frobenius twist on the degree-zero scalar
structure. The inclusion is $V$ in degree zero and the identity in
positive degrees; divided Frobenius is the identity in degree zero
and $p^{j-1}F$ in degree $j\geq1$. Again, the compatibility of the action of the correspondences with $F,V,d$ at the level of derived categories, implies that such action respects 
$\mathcal N^{\geq i}W\Omega_T^\bullet$ as well. The identifications of the Nygaard graded pieces with the conjugate
truncations in \cite[Lemma~8.2]{BMS2}, and the Hodge quotients in
\cite[Lemma~8.3]{BMS2}, give compatibility with the conjugate spectral
sequence.

\section{Correspondences on de Rham cohomology}\label{sec: kernel}

 Our main goal in this section is to prove Proposition \ref{prop: logarithmic description de Rham}.
We first compare the de Rham kernel and cokernel with their Hodge
counterparts. Put
\begin{align*}
K&:=\ker\bigl(H^1(X,\Omega_X^1)\longrightarrow H^1(Y,\Omega_Y^1)\bigr),\\
\mathcal C&:=\operatorname{coker}\bigl(
H^1(X,\Omega_X^1)\longrightarrow H^1(Y,\Omega_Y^1)\bigr).
\end{align*}

\begin{lemma}\label{lemma: de Rham kernel cokernel H11}
There are natural $\sigma^{-1}$-semilinear isomorphisms
\[
\ker(\beta)\xrightarrow{\simeq}K,
\qquad
\operatorname{coker}(\beta)\xrightarrow{\simeq}\mathcal C.
\]
\end{lemma}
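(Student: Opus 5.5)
We work with the conjugate spectral sequence $E_2^{a,b}=H^a(X,\mathcal H^b(\Omega_X^\bullet))\Rightarrow H^{a+b}_{\mathrm{dR}}(X/k)$ and identify $\mathcal H^b\simeq\Omega^b_{X^{(p)}}$ via the inverse Cartier operator. This identification is $\sigma^{-1}$-semilinear when read on $X$, which is where the semilinearity in the statement comes from. By Section~\ref{sec: correspondences}, the correspondence acts compatibly on the whole spectral sequence. The plan is to show that in total degree $2$ every graded piece other than $E^{1,1}$ maps isomorphically under $\Gamma_*$. The snake lemma applied to the resulting filtrations then identifies $\ker\beta$ and $\coker\beta$ with the kernel and cokernel on the $(1,1)$-piece of the abutment. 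Finally we compare that piece with $E_2^{1,1}=H^1(X,\Omega^1_X)$ (twisted).

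The other pieces are $E_\infty^{2,0}$ and $E_\infty^{0,2}$. Since $E_2^{a,0}=H^a(X,\mathcal O_X)$ up to Frobenius twist, the birational invariance of $H^a(\mathcal O)$ of Chatzistamatiou--R\"ulling, together with compatibility of $\Gamma_*$ with the differentials, gives isomorphisms on all $E_r^{a,0}$. For $E^{0,b}$ we use $H^0(X,\Omega^b_X)$, which is also a birational invariant: forms extend across codimension-$2$ loci, and the correspondence given by the graph closure acts as the usual pullback. The same holds for $Z$-type subgroups $H^0(X,Z^b)$, $H^0(X,B^b)$, which should give isomorphisms for the subquotients $E_r^{0,b}$, $r\le 3$. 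I therefore expect the corners to match.

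The main obstacle is the middle piece, because $E_\infty^{1,1}$ is a subquotient of $E_2^{1,1}$. It equals $\ker(d_2\colon E_2^{1,1}\to E_2^{3,0})$ modulo the image of $d_2\colon E_2^{-1,2}=0$. So $E_\infty^{1,1}=\ker(d_2)$, and we need to know that $\Gamma_*$ induces isomorphisms on kernel and cokernel between $E_2^{1,1}$ and $\ker d_2$. Since $\Gamma_*$ is an isomorphism on $E_2^{3,0}=H^3(\mathcal O)$, the snake lemma applied to $0\to\ker d_2\to E_2^{1,1}\to\im d_2\to 0$ reduces this to the claim that $\Gamma_*$ is an isomorphism on $\im d_2\subset H^3(\mathcal O)$. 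That in turn follows if it is an isomorphism on $E_3^{3,0}=H^3(\mathcal O)/\im d_2$. Here I would use the fact that the $E_\infty$ terms of the $\mathcal O$-row inject into de Rham cohomology, together with the isomorphism on $E^{3,0}$ pieces of the Hodge spectral sequence (also birationally invariant). Should this fail, the fallback is to run the same filtration argument with the Hodge-to-de Rham spectral sequence, and use the Cartier sequences \eqref{eq: cartier operator} to transfer between them.

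With the corner isomorphisms in place, the five lemma and the snake lemma on the two-step filtration yield $\ker\beta\simeq K$ and $\coker\beta\simeq\mathcal C$. The map is $\sigma^{-1}$-semilinear because of the Cartier identification.
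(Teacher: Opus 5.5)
Your architecture is essentially the paper's. The paper also uses the conjugate spectral sequence, packages the $(2,0)$ and $(1,1)$ pieces into the seven-term sequence $E_2^{2,0}\to A_T\to E_2^{1,1}\to E_2^{3,0}$, and treats the $(0,2)$ corner as the edge image $I_T=\operatorname{im}(H^2_{\mathrm{dR}}(T/k)\to E_2^{0,2}(T))$. However, the step you flag as the main obstacle is not closed, and the same defect affects your corner claims.

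That $\Gamma_*$ is an isomorphism on $E_2^{3,0}$ and commutes with $d_2$ only gives $\Gamma_*(\operatorname{im}d_2^X)\subseteq\operatorname{im}d_2^Y$. Equality amounts to $E_2^{1,1}(Y)=\Gamma_*E_2^{1,1}(X)+\ker d_2^Y$. This is not automatic, since $\Gamma_*$ on $H^1(\Omega^1)$ misses, for instance, exceptional divisor classes. For the same reason, ``isomorphisms on all $E_r^{a,0}$'' does not follow from compatibility with differentials once the source of a differential is not matched, as happens for $d_2\colon E_2^{1,1}\to E_2^{3,0}$. Similarly, the subspaces $E_r^{0,2}\subseteq E_2^{0,2}$ are kernels of differentials into non-invariant groups such as $E_2^{2,1}$. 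Injectivity of $\Gamma_*$ on them is clear, but surjectivity is not.

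Your suggested repair does not work either. $E_3^{3,0}$ differs from $E_\infty^{3,0}$ by the image of $d_3\colon E_3^{0,2}\to E_3^{3,0}$. An injection into $H^3_{\mathrm{dR}}$ gives nothing, because $H^3_{\mathrm{dR}}$ is not a birational invariant (blow up a curve of positive genus in a threefold). The appeal to the Hodge spectral sequence is not an argument.

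The missing ingredient is the transpose $\Gamma_f^t$. The Chatzistamatiou--R\"ulling invariance, in the form the paper uses, says that $\Gamma_f$ and $\Gamma_f^t$ act as mutually inverse maps on $H^a(\mathcal O)$ and on global forms, and both are compatible with $d_2$ and with the edge maps. Now suppose $\phi$ and $\psi$ are mutually inverse with $\phi(S_X)\subseteq S_Y$ and $\psi(S_Y)\subseteq S_X$. Then $S_Y=\phi\psi(S_Y)\subseteq\phi(S_X)$, so $\phi$ restricts to an isomorphism $S_X\simeq S_Y$.

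Apply this to three subspaces:
\begin{enumerate}
\item $\operatorname{im}(d_2)\subseteq E_2^{3,0}$;
\item $\operatorname{im}(d_2\colon E_2^{0,1}\to E_2^{2,0})$;
\item the edge image $E_\infty^{0,2}\subseteq E_2^{0,2}$, which needs neither higher differentials nor $H^0(Z^b)$, $H^0(B^b)$.
\end{enumerate}
This gives exactly the isomorphisms you need, and then your snake-lemma bookkeeping on the filtration is correct. This is precisely what Lemma~\ref{lemma: identification of kernel and cokernels} encodes through its hypothesis that all squares commute in both directions.
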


\begin{proof}
Consider the conjugate spectral sequences
\[
{}'E_2^{a,b}(T)=H^a\bigl(T,\mathcal H^b(\Omega_T^\bullet)\bigr)
\Longrightarrow H^{a+b}_{\mathrm{dR}}(T/k),
\qquad T=X,Y.
\]
By Cartier, ${}'E_2^{a,b}(T)$ identifies $\sigma^{-1}$-semilinearly with
$H^a(T,\Omega_T^b)$. Put
\[
A_T:=\ker\bigl(H^2_{\mathrm{dR}}(T/k)\to{}'E_2^{0,2}(T)\bigr).
\]
The seven-term exact sequence gives
\[
\begin{tikzcd}
{{}'E_2^{2,0}(X)} \arrow[r] \arrow[d,"\cong"]
& A_X \arrow[r] \arrow[d]
& {{}'E_2^{1,1}(X)} \arrow[r] \arrow[d]
& {{}'E_2^{3,0}(X)} \arrow[d,"\cong"] \\
{{}'E_2^{2,0}(Y)} \arrow[r]
& A_Y \arrow[r]
& {{}'E_2^{1,1}(Y)} \arrow[r]
& {{}'E_2^{3,0}(Y)}.
\end{tikzcd}
\]
The outer vertical arrows identify, through Cartier, with the maps on
$H^2(\mathcal O)$ and $H^3(\mathcal O)$. The transpose gives a commuting
reverse diagram, with inverse maps on these outer terms. A diagram chasing argument, see
Lemma~\ref{lemma: identification of kernel and cokernels},
identifies the kernels and cokernels of the two middle vertical arrows.

We are left to show that there is an isomorphism 
\[
\ker\left((H^2_{\mathrm{dR}}(X/k)\rightarrow H^2_{\mathrm{dR}}(Y/k)\right)\simeq \ker\left(A_X\rightarrow A_Y \right)
\]
and 
\[
\coker\left((H^2_{\mathrm{dR}}(X/k)\rightarrow H^2_{\mathrm{dR}}(Y/k)\right)\simeq \coker\left(A_X\rightarrow A_Y \right).
\]
In fact, the result then follows from the isomorphism ${}'E_2^{1,1}(T) \cong H^1(T^{(p)}, \Omega^1_{T^{(p)}})$ given by Cartier. To prove this, consider the image 
\[
I_T:=\operatorname{im}\bigl(H^2_{\mathrm{dR}}(T/k)\to{}'E_2^{0,2}(T)\bigr);
\]
since global two-forms are birational invariants, and the graph and transpose
preserve $I_T$, we get that $I_X\to I_Y$ is an isomorphism. Applying the snake
lemma to
\[
0\to A_T\to H^2_{\mathrm{dR}}(T/k)\to I_T\to0
\]
gives the result.
\end{proof}

\begin{lemma}\label{lemma: B1 birational invariance}
For every $i\geq0$, the correspondence induces an isomorphism
\[
H^i(X,B_X^1)\xrightarrow{\simeq}H^i(Y,B_Y^1),
\]
and the same holds for the transpose correspondence.
\end{lemma}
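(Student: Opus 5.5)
The plan is to use the short exact sequence
\[
0\longrightarrow \mathcal O_T \xrightarrow{F} \mathcal O_T \xrightarrow{d} B_T^1\longrightarrow 0,
\]
which holds on $T$ viewed as a sheaf of abelian groups, with $F$ the absolute Frobenius $f\mapsto f^p$. Exactness is clear: the kernel of $d$ on $\mathcal O_T$ is $\mathcal O_T^p$, and $B^1_T$ is by definition the image of $d$. Taking cohomology gives a long exact sequence
\[
\cdots\to H^i(T,\mathcal O_T)\xrightarrow{F}H^i(T,\mathcal O_T)\to H^i(T,B_T^1)\to H^{i+1}(T,\mathcal O_T)\xrightarrow{F}H^{i+1}(T,\mathcal O_T)\to\cdots,
\]
functorial for the correspondence action. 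Compatibility holds because, by Section~\ref{sec: correspondences}, the correspondence acts on $W_n\Omega^\bullet$ compatibly with $F$ and $d$. For $n=1$ this gives compatible maps on $\mathcal O$, on $\Omega^1$, and on the subsheaves $B^1$ and $Z^1$, together with compatibility with the connecting maps of \eqref{eq: cartier operator}. The sequence above is the degree-zero analogue: $d\colon\mathcal O\to B^1$ commutes with the action, and the absolute Frobenius on $H^i(\mathcal O)$ commutes with it as well. The Frobenius compatibility follows from compatibility with $F$ on $W\mathcal O$ reduced modulo $V$, or equivalently from the fact that correspondences of degree zero commute with the $p$-linear Frobenius on $H^i(\mathcal O)$.

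Next I invoke the birational invariance of structure-sheaf cohomology due to Chatzistamatiou--R\"ulling. For the graph closure $\Gamma$ of the birational map, $\Gamma_*\colon H^i(X,\mathcal O_X)\to H^i(Y,\mathcal O_Y)$ is an isomorphism for all $i$, with inverse given by the transpose ${}^t\Gamma_*$. This is the same input already used for the outer vertical arrows in the proof of Lemma~\ref{lemma: de Rham kernel cokernel H11}.

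The five lemma then finishes the argument. Map the long exact sequence for $X$ to the one for $Y$. The four terms surrounding $H^i(B^1)$ are $H^i(\mathcal O)$ twice and $H^{i+1}(\mathcal O)$ twice, and on each of these the vertical map is an isomorphism. Hence $H^i(X,B^1_X)\to H^i(Y,B^1_Y)$ is an isomorphism. The same argument applied to ${}^t\Gamma$ gives the statement for the transpose. Moreover, since ${}^t\Gamma_*\Gamma_*=\mathrm{id}$ on $H^*(\mathcal O)$, the two maps are mutually inverse on $H^i(B^1)$ by functoriality, although the lemma does not require this.

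The main obstacle is the precise compatibility of the correspondence action with the sequence $0\to\mathcal O\xrightarrow{F}\mathcal O\to B^1\to0$, and in particular with its connecting map, at the level of derived categories. Commutativity of individual squares on cohomology is not enough; the action must come from a morphism of distinguished triangles. To get this, I will identify the sequence with the $n=1$ instance of the relation $F\colon W_2\mathcal O\to\mathcal O$ together with $d$. Alternatively, I will identify $B^1_T$ with $\mathcal O_T/\mathcal O_T^p$ via $d$ and argue that the CR11 action on $\mathcal O$, which is defined using pushforward and Gysin maps compatible with Frobenius, descends to this quotient. If needed, I will fall back on the statement in Section~\ref{sec: correspondences} that the action is compatible with $F$, $V$, $d$ on $W_n\Omega^\bullet$ in the derived category, and take cones.
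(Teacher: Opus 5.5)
Your proposal is correct and follows the paper's proof: it uses the sequence $0\to\mathcal O_T\xrightarrow{F}\mathcal O_T\xrightarrow{d}B^1_T\to0$, then the Frobenius-compatible birational invariance of $H^i(\mathcal O)$ due to Chatzistamatiou--R\"ulling, then the five lemma on the long exact sequences. You also flag that the correspondence must act by a morphism of triangles, so that it commutes with the connecting maps; the paper takes this point for granted.
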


\begin{proof}
For $T=X,Y$ there is an exact sequence of additive sheaves
\[
0\longrightarrow\mathcal O_T\xrightarrow{a\mapsto a^p}\mathcal O_T
\xrightarrow{d}B_T^1\longrightarrow0.
\]
The correspondence action on the cohomology of the structure sheaf, and on its
Frobenius twist, are isomorphisms which commute with Frobenius. The result then
follows from the associated long exact sequences together with the birational invariance of $H^i(T, \mathcal{O}_T)$.
\end{proof}

We also use that
\[
H^0(X,Z_X^i)\simeq H^0(Y,Z_Y^i),
\qquad
H^0(X,B_X^i)\simeq H^0(Y,B_Y^i).
\]
Indeed, the first assertion follows from birational invariance of global
forms and compatibility with $d$, and the second from
$B_T^i=\ker(C:Z_T^i\to\Omega_T^i)$.

Define
\begin{align*}
K'&:=\ker\bigl(H^1(X,Z_X^1)\longrightarrow H^1(Y,Z_Y^1)\bigr),\\
\mathcal C'&:=\operatorname{coker}\bigl(
H^1(X,Z_X^1)\longrightarrow H^1(Y,Z_Y^1)\bigr).
\end{align*}
Applying \eqref{eq: cartier operator} gives
\[
\begin{tikzcd}
{H^1(X,B_X^1)} \arrow[r] \arrow[d,"\cong"]
& {H^1(X,Z_X^1)} \arrow[r,"C"] \arrow[d]
& {H^1(X,\Omega_X^1)} \arrow[r] \arrow[d]
& {H^2(X,B_X^1)} \arrow[d,"\cong"] \\
{H^1(Y,B_Y^1)} \arrow[r]
& {H^1(Y,Z_Y^1)} \arrow[r,"C"]
& {H^1(Y,\Omega_Y^1)} \arrow[r]
& {H^2(Y,B_Y^1)}.
\end{tikzcd}
\]
The transpose again gives a commuting reverse diagram, with inverse
outer maps. Hence Lemma~\ref{lemma: identification of kernel and cokernels}
gives $\sigma^{-1}$-semilinear isomorphisms
\begin{equation}\label{eq: C on K and coker}
C:K'\xrightarrow{\simeq}K,
\qquad
C:\mathcal C'\xrightarrow{\simeq}\mathcal C.
\end{equation}

The exact sequence
\[
0\longrightarrow Z_T^1\xrightarrow{\iota}\Omega_T^1
\xrightarrow{d}B_T^2\longrightarrow0
\]
gives maps
\[
\iota:K'\longrightarrow K,
\qquad
\iota:\mathcal C'\longrightarrow\mathcal C.
\]
The first is injective. Indeed, in the corresponding long exact sequences the
maps on $H^0(\Omega^1)$ and on $H^0(B^2)$ are isomorphisms, so the images of
the boundary maps $H^0(B^2)\to H^1(Z^1)$ are identified. Since $K'$ and $K$
have the same dimension by \eqref{eq: C on K and coker}, we obtain
\begin{equation}\label{eq: iota K iso}
\iota:K'\xrightarrow{\simeq}K.
\end{equation}

In particular, $\iota\circ C^{-1}:K\to K$ is a $\sigma$-semilinear automorphism. Hence, Lemma~\ref{lemma: semilinear algebra finite length} with
$n=1$ gives
\begin{equation}\label{eq: fixed points K}
\ker(\iota\circ C^{-1}-\mathrm{id}:K\to K)
\otimes_{\mathbf F_p}k\simeq K,
\end{equation}
an injective map 
\begin{equation}\label{eq: fixed points C}
\ker(\iota\circ C^{-1}-\mathrm{id}:\mathcal C\to\mathcal C)
\otimes_{\mathbf F_p}k\hookrightarrow\mathcal C
\end{equation}
and a surjective map 
\begin{equation}\label{eq: surjecivity K' to K}
    \iota\circ C^{-1}-\mathrm{id}:K\twoheadrightarrow K.
\end{equation}

\begin{rmk}
We do not need, and the preceding diagram does not imply, that
$\iota:\mathcal C'\to\mathcal C$ is an isomorphism.
\end{rmk}

We now use the logarithmic Cartier sequence \eqref{eq: logarithmic forms}.
It gives the diagram
\begin{equation}\label{diagram: logarithmic forms}
\begin{tikzcd}
{H^0(X,\Omega_X^1)} \arrow[r] \arrow[d,"\cong"]
& {H^1(X,\Omega_{X,\log}^1)} \arrow[r] \arrow[d]
& {H^1(X,Z_X^1)} \arrow[r,"\iota-C"] \arrow[d]
& {H^1(X,\Omega_X^1)} \arrow[d] \\
{H^0(Y,\Omega_Y^1)} \arrow[r]
& {H^1(Y,\Omega_{Y,\log}^1)} \arrow[r]
& {H^1(Y,Z_Y^1)} \arrow[r,"\iota-C"]
& {H^1(Y,\Omega_Y^1)}.
\end{tikzcd}
\end{equation}

\begin{lemma}\label{lemma: logarithmic kernel and cokernel}
The diagram \eqref{diagram: logarithmic forms} induces an isomorphism
\[
\ker\bigl(H^1(X,\Omega_{X,\log}^1)\to
H^1(Y,\Omega_{Y,\log}^1)\bigr)
\simeq\ker(\iota\circ C^{-1}-\mathrm{id}:K\to K).
\]
The natural map between cokernels
\[
\operatorname{coker}\bigl(H^1(X,\Omega_{X,\log}^1)\to
H^1(Y,\Omega_{Y,\log}^1)\bigr)
\longrightarrow\mathcal C'
\]
is injective, with image contained in
$\ker(\iota-C:\mathcal C'\to\mathcal C)$.
\end{lemma}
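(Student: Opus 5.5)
We will run a diagram chase on \eqref{diagram: logarithmic forms}, extended on the right by the next terms of the long exact sequences of \eqref{eq: logarithmic forms}. For $T=X,Y$ we have the exact sequence
\[
H^0(T,Z_T^1)\xrightarrow{\iota-C}H^0(T,\Omega_T^1)\to H^1(T,\Omega_{T,\log}^1)\to H^1(T,Z_T^1)\xrightarrow{\iota-C}H^1(T,\Omega_T^1),
\]
and the correspondence maps between the $X$ and $Y$ versions commute with it. We write $L_T:=\ker(\iota-C\colon H^1(T,Z_T^1)\to H^1(T,\Omega_T^1))$ and $M_T:=\operatorname{coker}(\iota-C\colon H^0(Z_T^1)\to H^0(\Omega_T^1))$. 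This gives short exact sequences $0\to M_T\to H^1(T,\Omega^1_{T,\log})\to L_T\to 0$. Since $H^0(Z^1)$ and $H^0(\Omega^1)$ are identified for $X$ and $Y$, we get $M_X\simeq M_Y$. The snake lemma then yields $\ker(H^1_{\log}(X)\to H^1_{\log}(Y))\simeq\ker(L_X\to L_Y)$ and $\operatorname{coker}(H^1_{\log})\simeq\operatorname{coker}(L_X\to L_Y)$. So the lemma reduces to analysing $L_X\to L_Y$.

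\textbf{Kernel.} We have $\ker(L_X\to L_Y)=\{z\in K' : \iota z=Cz\}$, where $\iota z$ and $Cz$ are computed in $H^1(X,\Omega_X^1)$. If $z\in K'$, both $\iota z$ and $Cz$ lie in $K$. By \eqref{eq: C on K and coker} and \eqref{eq: iota K iso}, the map $z\mapsto w:=Cz$ is a bijection from $K'$ to $K$, with $\iota z=\iota C^{-1}w$. Hence the condition $\iota z=Cz$ reads $(\iota\circ C^{-1}-\mathrm{id})(w)=0$. This identifies $\ker(L_X\to L_Y)$ additively with $\ker(\iota\circ C^{-1}-\mathrm{id}\colon K\to K)$, which gives the first claim.

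\textbf{Cokernel.} We need to show that $\operatorname{coker}(L_X\to L_Y)\to\mathcal C'$, induced by the inclusion $L_Y\subset H^1(Y,Z_Y^1)$, is injective. The image lands in $\ker(\iota-C\colon\mathcal C'\to\mathcal C)$ automatically, since $L_Y$ is killed by $\iota-C$. For injectivity, take $y\in L_Y$ with $y=f_*x$ for some $x\in H^1(X,Z_X^1)$. Then $f_*((\iota-C)x)=(\iota-C)y=0$, so $u:=(\iota-C)x\in K$. By the surjectivity \eqref{eq: surjecivity K' to K}, together with $K'\simeq K$ via $C$, we can find $z\in K'$ with $(\iota-C)z=u$. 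Indeed, writing $w=Cz$, we have $(\iota-C)z=(\iota C^{-1}-\mathrm{id})w$, and this map is onto $K$. Then $x-z\in L_X$ and $f_*(x-z)=y$, because $z\in K'$. Hence $y$ lies in the image of $L_X$, as required.

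\textbf{Main obstacle.} The main obstacle is bookkeeping. We must check that the first row of \eqref{diagram: logarithmic forms}, extended on the left by $H^0(Z^1)\to H^0(\Omega^1)$, gives the claimed isomorphism on the $M_T$. We must also check that the semilinear identifications $C\colon K'\simeq K$ and $\iota\colon K'\simeq K$ are used consistently, so that the equation $\iota z=Cz$ translates exactly into the fixed-point equation for $\iota\circ C^{-1}$. Both are routine once the snake lemma reduction is in place.
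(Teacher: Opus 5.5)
Your proposal is correct and follows the paper's proof. The paper also reduces to $\operatorname{im}(j_T)$ (your $L_T$), using invariance of $\ker(j_T)$ (your $M_T$), and then uses the surjectivity of $\iota-C\colon K'\to K$. The only difference is that you carry out by hand the diagram chase that the paper packages as Lemma~\ref{lemma: snake kernel cokernel}.
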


\begin{proof}
The short exact sequence \eqref{eq: logarithmic forms} induces, for every smooth $k$-scheme $T$ a map 
\[
j_T:H^1(T,\Omega_{T,\log}^1)\to H^1(T,Z_T^1)
\]
such that $\ker(j_T)=\im(H^0(T,\Omega^1_T)\rightarrow H^1(T,\Omega^1_{T,\log}))$. Since the global sections of the sheaves $Z^1$ and $\Omega^1$ are
birational invariants, the induced map $\ker(j_X)\to\ker(j_Y)$ is an
isomorphism, see Lemma~\ref{lemma: identification of kernel and cokernels} applied to the diagram with horizontal arrows $H^0(T,Z^1_T)\rightarrow H^0(T,\Omega^1_T)\rightarrow \ker(j_T)\rightarrow 0$ with $T$ equal to $X$ and $Y$ respectively. 

The snake lemma applied to
\[
0\rightarrow \ker(j_T)\rightarrow H^1(T,\Omega_{T,\log}^1)
\rightarrow\operatorname{im}(j_T)\rightarrow0
\]
therefore identifies both logarithmic kernels and cokernels with
those of the induced map $\operatorname{im}(j_X)\to\operatorname{im}(j_Y)$. Now apply Lemma~\ref{lemma: snake kernel cokernel} to the exact rows
\[
0\longrightarrow\operatorname{im}(j_T)
\longrightarrow H^1(T,Z_T^1)
\xrightarrow{\iota-C}H^1(T,\Omega_T^1)
\longrightarrow H^2(T,\Omega_{T,\log}^1).
\]
Its surjectivity hypothesis holds because
\[
\iota-C=(\iota\circ C^{-1}-\mathrm{id})\circ C:
K'\longrightarrow K
\]
is surjective, cf. equation \eqref{eq: surjecivity K' to K}. Lemma~\ref{lemma: snake kernel cokernel} gives therefore the identification at the level of the kernels and the
injection of $\coker(\im(j_X)\rightarrow \im(j_Y))$ into $\ker(\iota-C:\mathcal C'\to\mathcal C)$.

Finally, the isomorphism $C:K'\xrightarrow{\simeq}K$ identifies
$\ker(\iota-C:K'\to K)$ with $\ker(\iota\circ C^{-1}-\mathrm{id}\colon K \rightarrow K)$, which completes the proof.

\end{proof}
\begin{proof}[Proof of Proposition~\ref{prop: logarithmic description de Rham}]
Lemma~\ref{lemma: logarithmic kernel and cokernel}
identifies the logarithmic kernel with the fixed subgroup in
\eqref{eq: fixed points K} and embeds the logarithmic cokernel into
the fixed subgroup in \eqref{eq: fixed points C}. Extending scalars
from $\mathbf F_p$ to $k$ therefore gives an isomorphism onto $K$
and an injection into $\mathcal C$, see Lemma \ref{lemma: semilinear algebra finite length}.

\end{proof}

\subsection{Surjectivity at the level of kernels}
We are ready to prove the surjectivity of the natural map $\ker(\alpha)\to\ker(\beta)$ in
\eqref{diagram: universal coefficient}.
For a smooth $k$-scheme $T$, the logarithmic differential identifies
\[
H^1(T,\Omega^1_{T,\log})\simeq H^2_{\mathrm{fppf}}(T,\mu_p);
\]
see \cite{Gros}. The Kummer sequence (in the fppf topology) therefore gives an exact sequence
\begin{equation}\label{eq: Kummer logarithmic}
0\longrightarrow \NS(T)/p
\longrightarrow H^1(T,\Omega^1_{T,\log})
\longrightarrow \Br(T)[p]\longrightarrow0,
\end{equation}
 Here we use that multiplication by $p$ is surjective on
$\operatorname{Pic}^0(T)(k)$, so that $\operatorname{Pic}(T)/p=\NS(T)/p$.

Since the Brauer group is a birational invariant, the graph correspondence
induces an isomorphism on the right-hand terms of \eqref{eq: Kummer logarithmic}.
Consequently
\begin{equation}\label{eq: log kernel NS}
\ker\bigl(H^1(X,\Omega^1_{X,\log})\to
H^1(Y,\Omega^1_{Y,\log})\bigr)
\simeq
\ker\bigl(\NS(X)/p\to\NS(Y)/p\bigr).
\end{equation}
Combining this with Proposition~\ref{prop: logarithmic description de Rham}
gives
\begin{equation}\label{eq: kernel beta NS}
\ker(\beta)\simeq
\ker\bigl(\NS(X)/p\to\NS(Y)/p\bigr)\otimes_{\mathbf F_p}k.
\end{equation}

\begin{prop}\label{prop: kernel comparison}
The natural map $\ker(\alpha)\to\ker(\beta)$ in
\eqref{diagram: universal coefficient} is surjective.
\end{prop}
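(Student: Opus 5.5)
The plan is to use \eqref{eq: kernel beta NS}: $\ker(\beta)$ is spanned over $k$ by de Rham classes of divisors, and these divisor classes already come from crystalline cohomology. Concretely, for $T=X,Y$ consider the composite
\[
c_T\colon \NS(T)/p\longrightarrow H^1(T,\Omega^1_{T,\log})\longrightarrow H^1(T,Z^1_T)\longrightarrow H^2_{\mathrm{dR}}(T/k),
\]
where the last map comes from the conjugate filtration, through $A_T\subset H^2_{\mathrm{dR}}(T/k)$. I will check, using the compatibility of Gros' crystalline cycle class with its reductions, that $c_T$ agrees with the de Rham Chern class $D\mapsto c_1^{\mathrm{dR}}(\mathcal O(D))$. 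The de Rham Chern class factors as
\[
\NS(T)\xrightarrow{c_1^{\mathrm{cris}}}H^2_{\mathrm{cris}}(T/W)\longrightarrow H^2_{\mathrm{cris}}(T/W)/p\hookrightarrow H^2_{\mathrm{dR}}(T/k).
\]
By Section~\ref{sec: correspondences}, all these maps are compatible with the correspondence action, and on $\NS$ the action is the usual pushforward/pullback of divisors along the graph.

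The steps are as follows. First, trace the isomorphism $\ker(\beta)\simeq \ker(\NS(X)/p\to\NS(Y)/p)\otimes_{\mathbf F_p}k$ back through Lemmas~\ref{lemma: de Rham kernel cokernel H11} and \ref{lemma: logarithmic kernel and cokernel} and \eqref{eq: log kernel NS}, \eqref{eq: fixed points K}. This will show that it is induced, up to the $\sigma$-semilinear twists, which do not affect $k$-spans, by the $k$-linear extension of $c_X$ restricted to $N:=\ker(\NS(X)/p\to\NS(Y)/p)$. In other words, $\ker(\beta)$ is the $k$-span of $c_X(N)$. The essential observation here is that the identifications only used the maps $H^1(\Omega^1_{\log})\to H^1(Z^1)\to A_X\subset H^2_{\mathrm{dR}}$, and that the fixed vectors of $\iota\circ C^{-1}$ span $K$. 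Second, take $\bar D\in N$ and lift it to a divisor class $D\in\NS(X)$. Set $x:=c_1^{\mathrm{cris}}(D)\bmod p\in H^2_{\mathrm{cris}}(X/W)/p$. Then $\alpha(x)=c_1^{\mathrm{cris}}(f_*D)\bmod p$. Because $f_*D\in p\NS(Y)$, this is zero, so $x\in\ker(\alpha)$ and $x$ maps to $c_X(\bar D)$. Third, $\ker(\alpha)$ is a $k$-vector subspace, and $\ker(\alpha)\to\ker(\beta)$ is $k$-linear, being the restriction of the injection $H^2_{\mathrm{cris}}/p\hookrightarrow H^2_{\mathrm{dR}}$. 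Its image is therefore a $k$-subspace containing $c_X(N)$, so it contains all of $\ker(\beta)$.

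I expect the main obstacle to be the first step. One must make sure that the abstract chain of isomorphisms really sends $\bar D\otimes 1$ to a nonzero multiple of the de Rham class of $D$, rather than to some other element with the same image in $H^1(Y,\Omega^1_Y)$. The chain passes through Cartier twists, through the choice of $A_X$ versus $H^2_{\mathrm{dR}}$, and through the identification of $\ker(j_X)$. To handle this, I would bypass the chain and argue directly by dimension. The map $c_X\otimes k\colon N\otimes k\to\ker(\beta)$ is well defined by the second step. It is injective, because $\NS/p\to H^1(\Omega^1_{\log})$ is injective by \eqref{eq: Kummer logarithmic}, and log classes that are linearly independent over $\mathbf F_p$ stay linearly independent over $k$ in $H^1(Z^1)$ and then in $A_X$ (a Lemma~\ref{lemma: semilinear algebra finite length}-type statement, using that their images in $H^1(Z^1)$ are $\iota\circ C^{-1}$-fixed). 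Since $\dim_k N\otimes k=\dim_k\ker(\beta)$ by \eqref{eq: kernel beta NS}, this map is surjective, and the third step concludes.
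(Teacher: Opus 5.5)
Your proposal is correct and is essentially the paper's argument: you lift classes in $N=\ker\bigl(\NS(X)/p\to\NS(Y)/p\bigr)$ to crystalline Chern classes modulo $p$, and these lie in $\ker(\alpha)$ and span $\ker(\beta)$ over $k$ by \eqref{eq: kernel beta NS}. Your independence-plus-dimension check only makes explicit the spanning claim that the paper reads off from \eqref{eq: kernel beta NS}; note that injectivity of $j_X$ holds on $N$, via the isomorphism $\ker j_X\simeq\ker j_Y$ in the proof of Lemma~\ref{lemma: logarithmic kernel and cokernel}, and is not automatic for arbitrary log classes.
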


\begin{proof}
The crystalline first Chern class gives a correspondence-compatible map
\[
c_1^{\mathrm{cris}}:\NS(X)\longrightarrow H^2_{\mathrm{cris}}(X/W),
\]
whose reduction modulo $p$ maps to the de Rham first Chern class under
the left-hand arrow in \eqref{diagram: universal coefficient};
see \cite{Berthelot,Gros}. If a class in $\NS(X)/p$ belongs to the kernel of
$\NS(X)/p\to\NS(Y)/p$, its crystalline cycle class modulo $p$ belongs to
$\ker(\alpha)$. By \eqref{eq: kernel beta NS}, these cycle classes span $\ker(\beta)$ over
$k$. Hence $\ker(\alpha)\to\ker(\beta)$ is surjective.
\end{proof}

\section{The Nygaard filtration on Crystalline cohomology}\label{sec: nygaard}
Let $T$ be any $k$-scheme. We denote by $H^i(T,\mathbf Z_p(1))$ the syntomic cohomology of
$R\varprojlim_n R\Gamma_{\mathrm{fppf}}(T,\mu_{p^n})$. Let $f\colon X\dashrightarrow Y$ be a birational morphism between smooth proper $k$-variety. We define
\begin{align*}
\mathcal C_{\mathrm{cris}}&:=\operatorname{coker}\bigl(
H^2_{\mathrm{cris}}(X/W)\to H^2_{\mathrm{cris}}(Y/W)\bigr),\\
\mathcal C_{\mathrm{syn}}&:=\operatorname{coker}\bigl(
H^2(X,\mathbf Z_p(1))\to H^2(Y,\mathbf Z_p(1))\bigr).
\end{align*}

The aim of this section is to prove the integral comparison of Proposition \ref{prop: intro cokernel comparison}: 
\[
\mathcal C_{\mathrm{syn}}\otimes_{\mathbf Z_p}W\simeq \mathcal C_{\mathrm{cris}}.
\]

\subsection{The two Nygaard maps}
For $T=X,Y$, let
$\mathcal N_T:=\mathcal N^{\geq1}W\Omega_T^\bullet$.
The inclusion and divided Frobenius give maps
\[
\iota,\varphi_{(1)}:\mathcal N_T\longrightarrow W\Omega_T^\bullet;
\]
see \cite[Definition~8.1]{BMS2}. We define
\begin{align*}
K_{\mathcal N}&:=\ker\bigl(
\mathbb H^2(X,\mathcal N_X)\to\mathbb H^2(Y,\mathcal N_Y)\bigr),\\
\mathcal C_{\mathcal N}&:=\operatorname{coker}\bigl(
\mathbb H^2(X,\mathcal N_X)\to\mathbb H^2(Y,\mathcal N_Y)\bigr),\\
K_{\mathrm{cris}}&:=\ker\bigl(
H^2_{\mathrm{cris}}(X/W)\to H^2_{\mathrm{cris}}(Y/W)\bigr)
\end{align*}
where all the maps are given the the correspondence $\Gamma_f$, as explained in Section \ref{sec: correspondences}.
\begin{prop}\label{prop: nygaard comparison maps}
The maps $\iota$ and $\varphi_{(1)}$ induce isomorphisms $K_{\mathcal N} \xrightarrow{\sim} K_{\mathrm{cris}}$ and $\mathcal C_{\mathcal N} \xrightarrow{\sim} \mathcal C_{\mathrm{cris}}$.
\end{prop}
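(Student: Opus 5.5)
Both maps $\iota$ and $\varphi_{(1)}$ will be handled by the same argument: compare $\mathcal N_T$ and $W\Omega^\bullet_T$ through a short exact sequence whose third term has birationally invariant cohomology in the relevant degrees, and then apply Lemma~\ref{lemma: identification of kernel and cokernels} (using the transpose correspondence, exactly as in Section~\ref{sec: kernel}).

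\emph{The inclusion $\iota$.} By construction $\mathcal N_T$ differs from $W\Omega^\bullet_T$ only in degree zero, where $VW\mathcal O_T\subset W\mathcal O_T$. This gives a short exact sequence of complexes
\[
0\to\mathcal N_T\xrightarrow{\iota}W\Omega^\bullet_T\to W\mathcal O_T/VW\mathcal O_T=\mathcal O_T\to0,
\]
with $\mathcal O_T$ placed in degree $0$. The associated long exact sequence reads
\[
H^1(T,\mathcal O_T)\to\mathbb H^2(T,\mathcal N_T)\to H^2_{\mathrm{cris}}(T/W)\to H^2(T,\mathcal O_T),
\]
and it is compatible with the correspondence action by Section~\ref{sec: correspondences}. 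The outer terms are isomorphic under $\Gamma_f$ and ${}^t\Gamma_f$ by the birational invariance of $H^i(\mathcal O)$ due to Chatzistamatiou--R\"ulling, and these maps are mutually inverse. Lemma~\ref{lemma: identification of kernel and cokernels} then identifies the kernels and cokernels of the two middle vertical maps.

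\emph{The divided Frobenius $\varphi_{(1)}$.} Here I would use the standard fact from \cite{BMS2}, or its de Rham--Witt incarnation due to Illusie--Raynaud, that $\varphi_{(1)}$ identifies $\mathcal N^{\geq1}$ with $\tau^{\leq1}$ up to the higher Nygaard pieces. Concretely, I expect a short exact sequence (or a distinguished triangle)
\[
0\to\mathcal N^{\geq1}\xrightarrow{\varphi_{(1)}}W\Omega^\bullet\to Q\to0
\]
whose cokernel $Q$ has cohomology controlled by $W\Omega^{\geq2}/\text{(images of }F)$. In degree $2$ this cokernel should involve only $W\Omega^2/F$ modulo $dV$-type terms, so I would aim to show that $\mathbb H^1(Q)$ and $\mathbb H^2(Q)$ are built from $H^0$ of $W\Omega^2$-type sheaves and from $H^i(W\mathcal O)$. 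Both of these are birational invariants: global forms by Chatzistamatiou--R\"ulling, and $H^i(W\mathcal O)$ likewise. Lemma~\ref{lemma: identification of kernel and cokernels} then gives the result exactly as for $\iota$.

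\emph{Main obstacle.} The hardest step is making the cokernel of $\varphi_{(1)}$ explicit, because $\varphi_{(1)}$ is not injective on the nose. In degree $0$ it is the identity of $W\mathcal O$ (via $V$), in degree $1$ it is $F$, and in degree $j$ it is $p^{j-1}F$, so the cone is not simply a quotient sheaf. To get around this I would instead use \cite[Lemma~8.2]{BMS2}, which identifies $\mathrm{gr}^i_{\mathcal N}$ with conjugate truncations, and filter both sides. The plan is:
\begin{enumerate}
\item show that the maps on $\mathbb H^2$ and $\mathbb H^3$ of $\mathcal N^{\geq2}$ and of $\tau$-pieces induced by the correspondence differ by terms of the form $H^a(\Omega^b)$ with $b\geq2$, $a\leq1$, or $H^a(\mathcal O)$;
\item observe that these terms, after accounting for the $\sigma^{-1}$-twists coming from the conjugate spectral sequence, coincide with the terms already treated in Section~\ref{sec: kernel}, namely global $2$-forms, $Z^2$, $B^2$ and $H^i(\mathcal O)$;
\item check that the remaining comparison terms $H^1(\Omega^2)$ can be avoided: in the cone of $\varphi_{(1)}$ in total degree $\leq 2$ only $H^0(W\Omega^2)$-type terms should appear.
\end{enumerate}
If this bookkeeping goes through, a five-lemma-style argument with the transpose correspondence finishes the proof.
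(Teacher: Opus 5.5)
Your argument for $\iota$ is the same as the paper's and is fine. The $\varphi_{(1)}$ half, however, is not yet a proof. Its decisive steps are stated as expectations (``I expect'', ``should appear'', ``if this bookkeeping goes through''), and the obstacle you single out is not the real one. The divided Frobenius \emph{is} injective on the nose. In degree $0$ it is $Vx\mapsto x$, and in degree $j\geq1$ it is $p^{j-1}F$, with $F$ injective on $W\Omega^j_T$ because $VF=p$ and $W\Omega^j_T$ is $p$-torsion free. Hence the cone of $\varphi_{(1)}$ is just the termwise quotient $Q_{1,T}$, with $Q_{1,T}^0=0$ and $Q_{1,T}^j=W\Omega^j_T/p^{j-1}FW\Omega^j_T$ for $j\geq1$. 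In particular, no $H^i(W\mathcal O)$ terms enter, contrary to what you anticipate.

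What is actually needed are two facts about $Q_{1,T}$, neither of which your steps (1)--(3) supply.
\begin{enumerate}
\item The vanishing $\mathcal H^0(Q_{1,T})=\mathcal H^1(Q_{1,T})=0$, and likewise $\mathcal H^j(Q_{2,T})=0$ for $j\leq2$. This follows from Illusie's implication $dx\in pW\Omega^{r+1}_T\Rightarrow x\in FW\Omega^r_T$ \cite[I, (3.21.1.5)]{Illusie}. It gives $\mathbb H^1(T,Q_{1,T})=0$, so $\varphi_{(1)}\colon\mathbb H^2(T,\mathcal N_T)\to H^2_{\mathrm{cris}}(T/W)$ is injective.
\item A \emph{natural injection} $\mathbb H^2(T,Q_{1,T})\hookrightarrow H^0(T,\mathcal H^2(\Omega^\bullet_T))\simeq H^0(T,\Omega^2_T)$. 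It comes from $\varphi_{(1)}|_{\mathcal N^{\geq2}}=p\varphi_{(2)}$, which via \cite[Lemma~8.2]{BMS2} yields a triangle $Q_{2,T}\xrightarrow{p}Q_{1,T}\to\tau^{\geq2}\Omega^\bullet_T$, combined with $\mathbb H^2(T,Q_{2,T})=0$.
\end{enumerate}
Only this injectivity lets you transfer ``graph and transpose are mutually inverse'' from global $2$-forms to $\mathbb H^2(Q_1)$. After that, Lemma~\ref{lemma: identification of kernel and cokernels} applies with first terms $\mathbb H^1(Q_1)=0$ and last terms $\mathbb H^2(Q_1)$. Your step (1) instead allows contributions of type $H^1(\Omega^2)$, which are not birational invariants, and step (3) only hopes they drop out. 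The missing ingredient is precisely the low-degree vanishing of $\mathcal H^j(Q_{r,T})$ and the resulting embedding of $\mathbb H^2(Q_{1})$ into global sections.
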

\begin{proof}[Proof for $\iota$]
We begin the proof with the map $\iota$. Since $W\Omega_T^\bullet/\mathcal N_T=\mathcal O_T[0]$, the long exact
cohomology sequences give
\begin{equation}\label{eq: nygaard iota les}
\begin{tikzcd}[column sep=small]
{H^1(X,\mathcal O_X)} &
{\mathbb H^2(X,\mathcal N_X)} &
{H^2_{\mathrm{cris}}(X/W)} &
{H^2(X,\mathcal O_X)} \\
{H^1(Y,\mathcal O_Y)} &
{\mathbb H^2(Y,\mathcal N_Y)} &
{H^2_{\mathrm{cris}}(Y/W)} &
{H^2(Y,\mathcal O_Y)}
\arrow[from=1-1,to=1-2]
\arrow["\iota",from=1-2,to=1-3]
\arrow[from=1-3,to=1-4]
\arrow[from=2-1,to=2-2]
\arrow["\iota",from=2-2,to=2-3]
\arrow[from=2-3,to=2-4]
\arrow["\cong",from=1-1,to=2-1]
\arrow[from=1-2,to=2-2]
\arrow[from=1-3,to=2-3]
\arrow["\cong",from=1-4,to=2-4]
\arrow["\cong",shift left=3,from=2-1,to=1-1]
\arrow[shift left=3,from=2-2,to=1-2]
\arrow[shift left=3,from=2-3,to=1-3]
\arrow["\cong",shift left=3,from=2-4,to=1-4]
\end{tikzcd}
\end{equation}
The downward maps are induced by the graph, and the upward maps by its
transpose. On $H^1(\mathcal O)$ and $H^2(\mathcal O)$ they are mutually
inverse. Thus
Lemma~\ref{lemma: identification of kernel and cokernels}, applied to
this diagram, gives
\begin{equation}\label{eq: nygaard inclusion isos}
\iota:K_{\mathcal N}\xrightarrow{\simeq}K_{\mathrm{cris}},
\qquad
\iota:\mathcal C_{\mathcal N}\xrightarrow{\simeq}\mathcal C_{\mathrm{cris}},
\end{equation}
This proves the assertions for $\iota$. 
\end{proof}
To understand the maps induced by $\varphi_{(1)}$, we need to take into account also the second piece of the Nygaard filtration. We start with the following lemma.
\begin{lemma}\label{lemma: divided Frobenius low degrees}
For $r=1,2$, put
\[
Q_{r,T}:=\operatorname{coker}\bigl(
\varphi_{(r)}:\mathcal N^{\geq r}W\Omega_T^\bullet
\longrightarrow W\Omega_T^\bullet\bigr).
\]
Then $\mathcal H^j(Q_{r,T})=0$ for $j\leq r$, and there is a natural injection
\[
\mathbb H^2(T,Q_{1,T})
\hookrightarrow H^0\bigl(T,\mathcal H^2(\Omega_T^\bullet)\bigr).
\]
\end{lemma}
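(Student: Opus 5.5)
The plan is to compute the cokernels $Q_{r,T}$ degree by degree, using the explicit description of $\varphi_{(r)}$. Then I would apply Illusie's description of the images of $F$ in the de Rham--Witt complex \cite[I]{Illusie}, namely
\[
F^n W\Omega_T^i=\{x\in W\Omega_T^i : dx\in p^nW\Omega_T^{i+1}\}.
\]

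\emph{Degreewise description of $Q_{1,T}$.} In degree $0$ we have $\mathcal N^{\geq1}=VW\mathcal O_T$, and $\varphi_{(1)}$ sends $Vx\mapsto x$. This map is surjective onto $W\mathcal O_T$, so $Q_{1,T}^0=0$. In degree $j\geq1$ we have $\varphi_{(1)}=p^{j-1}F$, hence
\[
Q_{1,T}^j=W\Omega^j_T/p^{j-1}FW\Omega^j_T .
\]

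\emph{Degreewise description of $Q_{2,T}$.} Here $\mathcal N^{\geq2}$ equals $V^2W\mathcal O_T$ in degree $0$ and $VW\Omega^1_T$ in degree $1$. On these, $\varphi_{(2)}$ is given by stripping off $V$'s, so it is surjective in degrees $0$ and $1$. In degrees $j\geq2$ it is $p^{j-2}F$. So $Q_{2,T}$ vanishes in degrees $\leq1$ and
\[
Q_{2,T}^j=W\Omega^j_T/p^{j-2}FW\Omega^j_T \qquad (j\geq2).
\]

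\emph{Vanishing of $\mathcal H^j(Q_{r,T})$ for $j\leq r$.} For $j<r$ it is automatic, since the complex vanishes there. For $j=r$ we must show that every local section $\omega\in W\Omega^r_T$ with $d\omega\in pFW\Omega^{r+1}_T$ lies in $FW\Omega^r_T$. This holds because such an $\omega$ has $d\omega\in pW\Omega^{r+1}_T$, so Illusie's characterization with $n=1$ gives $\omega\in FW\Omega^r_T$. Since $Q^{r-1}=0$, there are no coboundaries to take into account. This gives $\mathcal H^j(Q_{r,T})=0$ for $j\leq r$.

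\emph{Reduction of the injection to a sheaf statement.} Because $\mathcal H^j(Q_{1,T})=0$ for $j\leq1$, the hypercohomology spectral sequence gives
\[
\mathbb H^2(T,Q_{1,T})\simeq H^0\bigl(T,\mathcal H^2(Q_{1,T})\bigr).
\]
By left exactness of $H^0$, it therefore suffices to construct an injective map of sheaves $\mathcal H^2(Q_{1,T})\hookrightarrow\mathcal H^2(\Omega^\bullet_T)$.

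\emph{Construction of the sheaf map.} I would use restriction $R$ to $W_1$, i.e.\ reduction $W\Omega^\bullet_T\to\Omega^\bullet_T$. It kills $pFW\Omega^2_T$, since $p=VF$ and $V$ vanishes after restriction to level one. It also sends cocycles to closed forms. Moreover, in degree $3$ we have $dx\in p^2FW\Omega^3_T$, which reduces to $0$. Coboundaries $d\bigl(W\Omega^1_T\bigr)$ go to exact forms. Hence $x\mapsto [Rx]$ gives a well-defined map $\mathcal H^2(Q_{1,T})\to\mathcal H^2(\Omega^\bullet_T)$, and it is natural in $T$. I would check compatibility with the alternative description through the Nygaard graded pieces \cite[Lemma~8.2]{BMS2} only if needed.

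\emph{Injectivity (main obstacle).} Suppose $dx\in p^2FW\Omega^3_T$ and $Rx=dy$ with $y\in\Omega^1_T$. Lift $y$ to $\tilde y\in W\Omega^1_T$. Then $x-d\tilde y$ lies in
\[
\ker R=VW\Omega^2_T+dVW\Omega^1_T .
\]
Absorbing the $dV$ part into $d\tilde y$, we may assume $x=Vz$ with $dVz\in p^2FW\Omega^3_T$. We must deduce $Vz\in pFW\Omega^2_T+dW\Omega^1_T$.

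First I would try to write $p^2F\eta=V(pF^2\eta)$, so that $dVz=V\bigl(pF^2\eta\bigr)$, and use the injectivity properties of $V$ together with the relation $FdV=d$ to get $dz\in pF(\cdots)$. Illusie's characterization should then show that $z\in F(\ldots)$ modulo $d$-exact terms, whence $Vz=VF(\cdots)=p(\cdots)$ lands in $pFW\Omega^2_T$ up to a coboundary.

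If this direct manipulation gets stuck, the fallback is to use the standard decomposition of $W\Omega^\bullet$ via the filtration $\operatorname{Fil}^n$ and Illusie's computation of $\operatorname{gr}^n W\Omega^\bullet$. Alternatively, one can use the quasi-isomorphism $W\Omega^\bullet\simeq L\eta_pW\Omega^\bullet$ induced by $F$ \cite{BMS2}, under which $Q_{1,T}$ identifies with a cone whose $\mathcal H^2$ is visibly a subsheaf of $\mathcal H^2(W\Omega^\bullet/p)\simeq\mathcal H^2(\Omega^\bullet_T)$.
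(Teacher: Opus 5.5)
Your degreewise description of $Q_{r,T}$ and your proof that $\mathcal H^j(Q_{r,T})=0$ for $j\le r$ are exactly the paper's argument. There is one slip: the degree-zero term of $\mathcal N^{\ge2}W\Omega_T^\bullet$ is $pVW\mathcal O_T$, not $V^2W\mathcal O_T$. Since $\varphi_{(2)}(pVx)=x$, your conclusion $Q^0_{2,T}=0$ is unaffected.

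For the injection you take a different route. The paper never chases elements. From $\varphi_{(1)}|_{\mathcal N^{\ge2}}=p\varphi_{(2)}$ it gets a short exact sequence $0\to Q_{2,T}\xrightarrow{p}Q_{1,T}\to Q_{1,T}/pQ_{2,T}\to0$. It identifies the quotient with $\tau^{\ge2}\Omega_T^\bullet$ via \cite[Lemma~8.2]{BMS2} and $W\Omega_T^\bullet/p\simeq\Omega_T^\bullet$. Injectivity then follows from $\mathbb H^2(T,Q_{2,T})=0$; this is what the case $r=2$ of the lemma is for.

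Your route passes to $H^0(T,\mathcal H^2(Q_{1,T}))$, maps by $R$, and kills the kernel by hand. It is more elementary: it needs neither \cite[Lemma~8.2]{BMS2} nor the $r=2$ vanishing, only $\ker R=VW\Omega^2_T+dVW\Omega^1_T$ and Illusie's description of $F^nW\Omega^i_T$ with $n=2$. It also induces the same map on $\mathcal H^2$ as the paper's construction. What the paper's version buys is that the injection comes from a distinguished triangle of complexes, so compatibility with the correspondence action, which lives in the derived category, is automatic. For yours, you should note that $R$ already defines a map of complexes $Q_{1,T}\to\tau^{\ge2}\Omega_T^\bullet$ (with $\Omega^1_T/Z^1_T$ in degree one). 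This holds because $R(FW\Omega^1_T)\subseteq Z^1_T$ and $p$ kills $\Omega^j_T$.

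The decisive step, however, is left as a hedged sketch, and as written it does not close. From $dz\in pF(\cdots)$, Illusie's characterization only yields $z\in FW\Omega^2_T$, hence $Vz\in pW\Omega^2_T$. That is not enough to land in $pFW\Omega^2_T+dW\Omega^1_T$. You need neither the injectivity of $V$ nor a $d$-exact correction; simply apply $F$ and use $FdV=d$:
\[
dz=F(dVz)\in F\bigl(p^2FW\Omega^3_T\bigr)=p^2F^2W\Omega^3_T\subseteq p^2W\Omega^3_T .
\]
The characterization with $n=2$ then gives $z=F^2w$ for some local section $w$. Hence $Vz=(VF)(Fw)=pFw\in pFW\Omega^2_T$, so $[x]=0$ in $\mathcal H^2(Q_{1,T})$. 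With this line inserted, your argument is a complete proof of the lemma.
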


\begin{proof}
The map $\varphi_{(r)}$ is termwise injective, and its image is described
in \cite[Definition~8.1 and the proof of Lemma~8.2]{BMS2}. Hence
\[
Q_{r,T}^j=
\begin{cases}
0,&j<r,\\
W\Omega_T^j/p^{j-r}FW\Omega_T^j,&j\geq r.
\end{cases}
\]
A cycle in degree $r$ is locally represented by $x\in W\Omega_T^r$ with
$dx\in pFW\Omega_T^{r+1}$. By \cite[I, (3.21.1.5)]{Illusie},
\[
dx\in pFW\Omega_T^{r+1}\subseteq pW\Omega_T^{r+1}
\quad\Longrightarrow\quad x\in FW\Omega_T^r
\quad\Longrightarrow\quad [x]=0.
\]
This proves the vanishing in degrees $j\leq r$.

To compare $Q_{1,T}$ with the de Rham complex, use
$\varphi_{(1)}|_{\mathcal N^{\geq2}}=p\varphi_{(2)}$:
\[
\begin{tikzcd}[column sep=small]
0 &
{\mathcal N^{\geq2}W\Omega_T^\bullet} &
{W\Omega_T^\bullet} &
{Q_{2,T}} & 0 \\
0 &
{\mathcal N_T} &
{W\Omega_T^\bullet} &
{Q_{1,T}} & 0
\arrow[from=1-1,to=1-2]
\arrow["\varphi_{(2)}",from=1-2,to=1-3]
\arrow[from=1-3,to=1-4]
\arrow[from=1-4,to=1-5]
\arrow[from=2-1,to=2-2]
\arrow["\varphi_{(1)}",from=2-2,to=2-3]
\arrow[from=2-3,to=2-4]
\arrow[from=2-4,to=2-5]
\arrow[hookrightarrow,from=1-2,to=2-2]
\arrow["p",from=1-3,to=2-3]
\arrow["p",from=1-4,to=2-4]
\end{tikzcd}
\]
The right vertical map is injective. Indeed, its source vanishes
below degree two, and in degree $j\geq2$ the $p$-torsion-freeness of
$W\Omega_T^j$ gives
\[
px\in p^{j-1}FW\Omega_T^j
\quad\Longrightarrow\quad x\in p^{j-2}FW\Omega_T^j.
\]
Taking cofibers of the vertical maps gives a triangle
\[
\mathcal N_T/\mathcal N^{\geq2}W\Omega_T^\bullet
\xrightarrow{\varphi_{(1)}\bmod p}W\Omega_T^\bullet/p
\longrightarrow Q_{1,T}/pQ_{2,T}\longrightarrow.
\]
By \cite[Lemma~8.2]{BMS2}, the first map identifies, in the derived
category, with the canonical map
$\tau^{\leq1}\Omega_T^\bullet\to\Omega_T^\bullet$.
Thus
\[
Q_{1,T}/pQ_{2,T}\simeq\tau^{\geq2}\Omega_T^\bullet,
\qquad
Q_{2,T}\xrightarrow{p}Q_{1,T}
\longrightarrow\tau^{\geq2}\Omega_T^\bullet
\longrightarrow Q_{2,T}[1],
\]
where $\tau$ denotes canonical truncation. Since
$\mathbb H^2(T,Q_{2,T})=0$, the associated long exact sequence gives
\[
\mathbb H^2(T,Q_{1,T})
\hookrightarrow\mathbb H^2(T,\tau^{\geq2}\Omega_T^\bullet)
=H^0\bigl(T,\mathcal H^2(\Omega_T^\bullet)\bigr).
\]
The construction uses the compatible divided Frobenius and reduction
maps, so the injection has the asserted naturality.
\end{proof}

\begin{proof}[End of proof of Proposition~\ref{prop: nygaard comparison maps}]
The vanishing $\mathbb H^1(T,Q_{1,T})=0$ gives
\begin{equation}\label{eq: nygaard phi degree two}
0\longrightarrow\mathbb H^2(T,\mathcal N_T)
\xrightarrow{\varphi_{(1)}}H^2_{\mathrm{cris}}(T/W)
\longrightarrow\mathbb H^2(T,Q_{1,T}).
\end{equation}
By Cartier, the target of the injection in the previous lemma identifies
semilinearly with $H^0(T,\Omega_T^2)$.
The graph and transpose are mutually inverse on global two-forms.
Naturality of the injection therefore gives
\[
f^*f_*=\mathrm{id}\text{ on }\mathbb H^2(X,Q_{1,X}),
\qquad
f_*f^*=\mathrm{id}\text{ on }\mathbb H^2(Y,Q_{1,Y}).
\]
Both maps preserve the images of the last arrows in
\eqref{eq: nygaard phi degree two}; their restrictions to these images
are therefore mutually inverse as well.
Replacing the last terms by these images gives short exact rows with
an isomorphism on the right. The snake lemma yields
\[
\varphi_{(1)}:K_{\mathcal N}\xrightarrow{\simeq}K_{\mathrm{cris}},
\qquad
\varphi_{(1)}:\mathcal C_{\mathcal N}\xrightarrow{\simeq}\mathcal C_{\mathrm{cris}}.
\]
Together with \eqref{eq: nygaard inclusion isos}, we have proved
\begin{equation}\label{eq: nygaard two isos}
\begin{aligned}
\iota,\varphi_{(1)} &:K_{\mathcal N}\xrightarrow{\simeq}K_{\mathrm{cris}},\\
\iota,\varphi_{(1)} &:\mathcal C_{\mathcal N}\xrightarrow{\simeq}\mathcal C_{\mathrm{cris}}.
\end{aligned}
\end{equation}

\end{proof}

\subsection{Identification of the syntomic cokernel}
The maps induced by $\iota$ are $W$-linear, and those induced by
$\varphi_{(1)}$ are $\sigma$-semilinear. The four modules are finitely
generated over $W$ by properness. On either pair in \eqref{eq: nygaard two isos},
\[
\iota-\varphi_{(1)}=
(\mathrm{id}-\varphi_{(1)}\circ\iota^{-1})\circ\iota,
\]
and $\varphi_{(1)}\circ\iota^{-1}$ is a $\sigma$-semilinear
automorphism. Thus, Lemma~\ref{lemma: semilinear algebra over W torsion plus torsion free}
gives
\begin{equation}\label{eq: nygaard kernel surj}
\iota-\varphi_{(1)}\colon K_{\mathcal N}\longrightarrow K_{\mathrm{cris}}
\quad\text{surjective},
\end{equation}
and 
\begin{equation}\label{eq: nygaard fixed descent}
\ker\bigl(\iota-\varphi_{(1)}:
\mathcal C_{\mathcal N}\to\mathcal C_{\mathrm{cris}}\bigr)
\otimes_{\mathbf Z_p}W\xrightarrow{\simeq}\mathcal C_{\mathrm{cris}},
\qquad m\otimes w\longmapsto w\iota(m);
\end{equation}
we now identify this last kernel with $\mathcal C_{\mathrm{syn}}$.

By \cite[Proposition~8.4]{BMS2}, there is an exact sequence of complexes
on the pro-\'etale site
\begin{equation}\label{eq: nygaard syntomic complex}
0\longrightarrow W\Omega^1_{T,\log}[-1]
\longrightarrow\mathcal N_T
\xrightarrow{\iota-\varphi_{(1)}}W\Omega_T^\bullet
\longrightarrow0.
\end{equation}
Using $\mathbf Z_p(1)\simeq W\Omega^1_{T,\log}[-1]$, we obtain the
following diagram with exact rows:
\begin{equation}\label{diagram: nygaard syntomic}
\begin{tikzcd}[
column sep=small,
cells={font=\small},
labels={font=\scriptsize}
]
{H^1_{\mathrm{cris}}(X/W)} &
{H^2(X,\mathbf Z_p(1))} &
{\mathbb H^2(X,\mathcal N_X)} &
{H^2_{\mathrm{cris}}(X/W)} &
{H^3(X,\mathbf Z_p(1))_{\mathrm{tors}}} \\
{H^1_{\mathrm{cris}}(Y/W)} &
{H^2(Y,\mathbf Z_p(1))} &
{\mathbb H^2(Y,\mathcal N_Y)} &
{H^2_{\mathrm{cris}}(Y/W)} &
{H^3(Y,\mathbf Z_p(1))_{\mathrm{tors}}}
\arrow["\partial_X",from=1-1,to=1-2]
\arrow["j_X",from=1-2,to=1-3]
\arrow["\iota-\varphi_{(1)}",from=1-3,to=1-4]
\arrow[from=1-4,to=1-5]
\arrow["\partial_Y",from=2-1,to=2-2]
\arrow["j_Y",from=2-2,to=2-3]
\arrow["\iota-\varphi_{(1)}",from=2-3,to=2-4]
\arrow[from=2-4,to=2-5]
\arrow["\cong",from=1-1,to=2-1]
\arrow[from=1-2,to=2-2]
\arrow[from=1-3,to=2-3]
\arrow[from=1-4,to=2-4]
\arrow["\cong",from=1-5,to=2-5]
\end{tikzcd}
\end{equation}
Note that the boundary $H^2_{\mathrm{cris}}(T/W) \rightarrow H^3(T,\mathbf Z_p(1))$ takes values in torsion. Indeed, after
inverting $p$, the inclusion is an isomorphism and the middle map
becomes $1-\varphi/p$ on $H^2_{\mathrm{cris}}(T/W)[1/p]$.
This map is surjective by slope decomposition over the algebraically
closed field $k$.

The leftmost vertical map is an isomorphism by birational invariance
of $H^1_{\mathrm{cris}}$. For the rightmost one, we use the functorial
Brauer comparison
\[
H^3(T,\mathbf Z_p(1))_{\mathrm{tors}}
\simeq\operatorname{Br}(T)[p^\infty]/
\operatorname{Br}(T)[p^\infty]_{\mathrm{div}};
\]
see \cite[Appendix~A, Theorem~A.1 and its proof]{SkorobogatovPetrov}.
Birational invariance of the Brauer group therefore gives the indicated
isomorphism on the right.

\begin{prop}\label{prop: nygaard cokernel identification}
The maps $j_X,j_Y$ induce an isomorphism
\[
\mathcal C_{\mathrm{syn}}\xrightarrow{\simeq}
\ker\bigl(\iota-\varphi_{(1)}:
\mathcal C_{\mathcal N}\to\mathcal C_{\mathrm{cris}}\bigr).
\]
Consequently, the integral logarithmic class map induces an isomorphism
\begin{equation}\label{eq: integral crystalline cokernel syntomic}
\mathcal C_{\mathrm{syn}}\otimes_{\mathbf Z_p}W
\xrightarrow{\simeq}\mathcal C_{\mathrm{cris}}.
\end{equation}
\end{prop}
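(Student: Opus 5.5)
The plan is to treat diagram \eqref{diagram: nygaard syntomic} as a map between two long exact sequences and to run a diagram chase adapted to two-term sequences. For $T=X,Y$ the row gives a short exact sequence
\[
0\to \operatorname{coker}(\partial_T)\xrightarrow{\bar j_T} \mathbb H^2(T,\mathcal N_T)\xrightarrow{\iota-\varphi_{(1)}} \operatorname{im}(\iota-\varphi_{(1)})_T\to 0,
\]
and the correspondence induces compatible maps on all three terms. Since $H^1_{\mathrm{cris}}(X/W)\to H^1_{\mathrm{cris}}(Y/W)$ is an isomorphism and commutes with $\partial$, the map $\operatorname{coker}\partial_X\to\operatorname{coker}\partial_Y$ has the same cokernel as $H^2(X,\mathbf Z_p(1))\to H^2(Y,\mathbf Z_p(1))$, namely $\mathcal C_{\mathrm{syn}}$. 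Its kernel is the image of $\ker$ on $H^2(\mathbf Z_p(1))$, which is harmless.

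The first step is to show that the image term behaves well. Write $I_T=\operatorname{im}(\iota-\varphi_{(1)})=\ker(H^2_{\mathrm{cris}}(T/W)\to H^3(T,\mathbf Z_p(1))_{\mathrm{tors}})$. The rightmost vertical map in \eqref{diagram: nygaard syntomic} is an isomorphism, so a snake-lemma argument on $0\to I_T\to H^2_{\mathrm{cris}}\to \operatorname{im}\to 0$ identifies $\operatorname{coker}(I_X\to I_Y)$ with a subgroup of $\mathcal C_{\mathrm{cris}}$. I expect it to be all of $\mathcal C_{\mathrm{cris}}$ once the image in $H^3_{\mathrm{tors}}$ is shown to correspond isomorphically. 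Similarly $\ker(I_X\to I_Y)$ maps onto $K_{\mathrm{cris}}$ modulo the corresponding kernel.

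The second step applies the snake lemma to the two short exact sequences above. It yields
\[
\ker(I_X\to I_Y)\xrightarrow{\delta}\mathcal C_{\mathrm{syn}}\to\mathcal C_{\mathcal N}\xrightarrow{\iota-\varphi_{(1)}}\operatorname{coker}(I_X\to I_Y).
\]
Two facts are then needed. (a) The connecting map $\delta$ vanishes. This comes from \eqref{eq: nygaard kernel surj}: every element of $K_{\mathrm{cris}}$, and hence of $\ker(I_X\to I_Y)$, lifts to $K_{\mathcal N}$, that is, to an element of $\mathbb H^2(X,\mathcal N_X)$ that already maps to zero in $\mathbb H^2(Y,\mathcal N_Y)$. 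So $\delta=0$ and $\mathcal C_{\mathrm{syn}}\to\mathcal C_{\mathcal N}$ is injective. (b) $\operatorname{coker}(I_X\to I_Y)\to\mathcal C_{\mathrm{cris}}$ is injective. Then the kernel of $\iota-\varphi_{(1)}$ on $\mathcal C_{\mathcal N}$, with values in $\mathcal C_{\mathrm{cris}}$, equals the image of $\mathcal C_{\mathrm{syn}}$. For (b), the snake lemma for $0\to I_T\to H^2_{\mathrm{cris}}\to \operatorname{im}_T\to0$ has kernel term $\ker(\operatorname{im}_X\to\operatorname{im}_Y)$, and this vanishes because $\operatorname{im}_T\subset H^3(T,\mathbf Z_p(1))_{\mathrm{tors}}$ and the map on the latter is an isomorphism. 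Hence $\operatorname{coker}(I_X\to I_Y)\hookrightarrow\mathcal C_{\mathrm{cris}}$.

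Finally, I combine the isomorphism $\mathcal C_{\mathrm{syn}}\simeq\ker(\iota-\varphi_{(1)}\colon\mathcal C_{\mathcal N}\to\mathcal C_{\mathrm{cris}})$ with \eqref{eq: nygaard fixed descent} to obtain \eqref{eq: integral crystalline cokernel syntomic}. The composite $m\otimes w\mapsto w\,\iota(j_Y(m))$ is the logarithmic class map, since $\iota\circ j=\varphi_{(1)}\circ j$ on the fixed part.

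The step I expect to be the main obstacle is (a). One has to check carefully that the snake connecting map is computed with compatible lifts, using elements of $K_{\mathcal N}$ itself rather than merely elements whose images agree modulo $\operatorname{coker}\partial$. One also has to check that the surjectivity in \eqref{eq: nygaard kernel surj} applies to the kernel of $I_X\to I_Y$ and not only to $K_{\mathrm{cris}}$. Since $I_X\subset H^2_{\mathrm{cris}}(X/W)$, this kernel is $K_{\mathrm{cris}}\cap I_X$, and $\iota-\varphi_{(1)}$ maps $K_{\mathcal N}$ onto $K_{\mathrm{cris}}$ with image automatically in $I_X$, so the statement should hold.
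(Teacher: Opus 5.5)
Your proposal is correct and is essentially the paper's proof. You use surjectivity on $H^1_{\mathrm{cris}}$ to identify $\mathcal C_{\mathrm{syn}}$ with the cokernel on $\operatorname{im}(j_T)=\operatorname{coker}(\partial_T)$. Your steps (a) and (b) are exactly the two halves of Lemma~\ref{lemma: snake kernel cokernel} applied to the rows $0\to\operatorname{im}(j_T)\to\mathbb H^2(T,\mathcal N_T)\to H^2_{\mathrm{cris}}(T/W)\to H^3(T,\mathbf Z_p(1))_{\mathrm{tors}}$, and you finish with \eqref{eq: nygaard fixed descent}, as the paper does. The aside that $\operatorname{coker}(I_X\to I_Y)$ should be all of $\mathcal C_{\mathrm{cris}}$ is neither needed nor justified. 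The obstacle you flag in (a) is not a real one: $\ker(I_X\to I_Y)=K_{\mathrm{cris}}$, and exactness of the snake sequence at that term already forces $\delta=0$.
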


\begin{proof}
In \eqref{diagram: nygaard syntomic}, the first vertical map is
surjective. Hence $\ker(j_X)\to\ker(j_Y)$ is surjective, and the
snake lemma applied to
\[
0\longrightarrow\ker(j_T)\longrightarrow H^2(T,\mathbf Z_p(1))
\longrightarrow\operatorname{im}(j_T)\longrightarrow0
\]
identifies $\mathcal C_{\mathrm{syn}}$ with the cokernel of
$\operatorname{im}(j_X)\to\operatorname{im}(j_Y)$.

Apply Lemma~\ref{lemma: snake kernel cokernel} to the exact rows
\[
0\longrightarrow\operatorname{im}(j_T)
\longrightarrow\mathbb H^2(T,\mathcal N_T)
\xrightarrow{\iota-\varphi_{(1)}}H^2_{\mathrm{cris}}(T/W)
\longrightarrow H^3(T,\mathbf Z_p(1))_{\mathrm{tors}}.
\]
The map on the middle cokernels is surjective by
\eqref{eq: nygaard kernel surj}, and the map on the last terms is
injective by the Brauer comparison. The equality assertion of the
lemma therefore gives the first isomorphism. Equation
\eqref{eq: nygaard fixed descent} gives the second; its map is induced
by $\iota\circ j_T$, so it is the integral logarithmic class map.
\end{proof}

\section{Proof of the main theorem}\label{sec: final proof}
We have now all the ingredients to prove Theorem~\ref{main thm}. Since cokernels commute with reduction modulo $p$,
\eqref{eq: integral crystalline cokernel syntomic} gives
\begin{equation}\label{eq: coker alpha Ccris}
\operatorname{coker}(\alpha)\simeq \mathcal C_{\mathrm{cris}}/p
\simeq(\mathcal C_{\mathrm{syn}} /p)\otimes_{\mathbf F_p}k.
\end{equation}
We now identify $\mathcal C_{\mathrm{syn}} /p$ with the logarithmic cokernel.
The coefficient triangle for multiplication by $p$ gives the diagram
\[
\begin{tikzcd}[column sep=small,cells={font=\small}]
0 &
{H^2(X,\mathbf Z_p(1))/p} &
{H^2_{\mathrm{fppf}}(X,\mu_p)} &
{H^3(X,\mathbf Z_p(1))[p]} & 0 \\
0 &
{H^2(Y,\mathbf Z_p(1))/p} &
{H^2_{\mathrm{fppf}}(Y,\mu_p)} &
{H^3(Y,\mathbf Z_p(1))[p]} & 0
\arrow[from=1-1,to=1-2]
\arrow[from=1-2,to=1-3]
\arrow[from=1-3,to=1-4]
\arrow[from=1-4,to=1-5]
\arrow[from=2-1,to=2-2]
\arrow[from=2-2,to=2-3]
\arrow[from=2-3,to=2-4]
\arrow[from=2-4,to=2-5]
\arrow[from=1-2,to=2-2]
\arrow[from=1-3,to=2-3]
\arrow["\cong",from=1-4,to=2-4]
\end{tikzcd}
\]
with exact rows; see
\cite[Appendix~A, (A.3)--(A.4)]{SkorobogatovPetrov}.
The rightmost vertical map is an isomorphism by the same Brauer
comparison. The snake lemma therefore identifies the cokernels of the
first two vertical maps:
\begin{equation}\label{eq: syntomic mod p mu p}
\mathcal C_{\mathrm{syn}} /p\simeq\operatorname{coker}\bigl(
H^2_{\mathrm{fppf}}(X,\mu_p)\to
H^2_{\mathrm{fppf}}(Y,\mu_p)\bigr).
\end{equation}
Using $H^2_{\mathrm{fppf}}(T,\mu_p)\simeq
H^1(T,\Omega^1_{T,\log})$ (see \cite{Gros}), we obtain
\begin{equation}\label{eq: coker alpha logarithmic}
\operatorname{coker}(\alpha)\simeq
\operatorname{coker}\bigl(
H^1(X,\Omega^1_{X,\log})\to
H^1(Y,\Omega^1_{Y,\log})\bigr)\otimes_{\mathbf F_p}k.
\end{equation}

\begin{lemma}\label{lem: cokernel comparison}
The natural map $\operatorname{coker}(\alpha)\to\operatorname{coker}(\beta)$ in
\eqref{diagram: universal coefficient} is injective.
\end{lemma}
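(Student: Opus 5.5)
The plan is to factor the natural map $\operatorname{coker}(\alpha)\to\operatorname{coker}(\beta)$ through the logarithmic cokernel, using \eqref{eq: coker alpha logarithmic} on the source side and Proposition~\ref{prop: logarithmic description de Rham} on the target side. Write
\[
L:=\operatorname{coker}\bigl(H^1(X,\Omega^1_{X,\log})\to H^1(Y,\Omega^1_{Y,\log})\bigr).
\]
Then \eqref{eq: coker alpha logarithmic} gives an isomorphism $L\otimes_{\mathbf F_p}k\simeq\operatorname{coker}(\alpha)$. Proposition~\ref{prop: logarithmic description de Rham} gives an injection $L\otimes_{\mathbf F_p}k\hookrightarrow\operatorname{coker}(\beta)$. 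It therefore suffices to show that the triangle formed by these three maps commutes, possibly up to an automorphism of $L\otimes k$ such as a Frobenius twist, which does not affect injectivity.

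The first step is to make both identifications explicit on generators. The isomorphism \eqref{eq: coker alpha logarithmic} comes from Proposition~\ref{prop: nygaard cokernel identification}, whose map is induced by the integral logarithmic class map $\iota\circ j_Y\colon H^2(Y,\mathbf Z_p(1))\to H^2_{\mathrm{cris}}(Y/W)$, reduced mod $p$ and combined with \eqref{eq: syntomic mod p mu p}. Hence the class of $x\in H^1(Y,\Omega^1_{Y,\log})=H^2_{\mathrm{fppf}}(Y,\mu_p)$, lifted to $H^2(Y,\mathbf Z_p(1))$ modulo the image from $X$, goes to its crystalline logarithmic class modulo $p$. Its image in $\operatorname{coker}(\beta)$ is then the de Rham class $\operatorname{dlog}$ of $x$, because the mod $p$ reduction of $W\Omega^1_{\log}[-1]\to W\Omega^\bullet$ is $\Omega^1_{\log}[-1]\to\Omega^\bullet$. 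On the other side, the injection of Proposition~\ref{prop: logarithmic description de Rham} is assembled from three pieces: Lemma~\ref{lemma: logarithmic kernel and cokernel}, which sends $x$ to $j_Y(x)\in H^1(Y,Z^1_Y)$ and hence into $\mathcal C'$; the map $C$ or $\iota$ to $\mathcal C$; and Lemma~\ref{lemma: de Rham kernel cokernel H11}, which identifies $\mathcal C$ with $\operatorname{coker}(\beta)$ through the conjugate filtration.

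The second step is to check compatibility. The de Rham class of $x$ lies in $A_Y$ (the first piece of the conjugate filtration), and its image in ${}'E_2^{1,1}=H^1(\mathcal H^1)$ is the class of $j_Y(x)$ in $H^1(Z^1)\to H^1(\mathcal H^1)$. Under the Cartier isomorphism $\mathcal H^1\simeq\Omega^1$ this is $C(j_Y(x))=\iota(j_Y(x))$, since logarithmic forms are fixed by $C$. So the two routes agree up to the $\sigma^{-1}$-semilinear Cartier identification and the $k$-linear extension $x\otimes w\mapsto w\cdot(\,)$. For this to hold one must track the Frobenius twist correctly: $L\otimes k\to\mathcal C$ via $x\otimes w\mapsto w\,\iota(x)$ versus $w^{1/p}\iota(x)$ differ only by an automorphism of $L\otimes k$.

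I expect the main obstacle to be exactly this compatibility. The injection in Proposition~\ref{prop: logarithmic description de Rham} passes through the snake-lemma identifications of Lemma~\ref{lemma: de Rham kernel cokernel H11}, namely $A_Y\to{}'E_2^{1,1}$ and the quotients by $I_T$ and by the images of $H^2(\mathcal O)$. One has to verify that the de Rham Chern/log class of $x$ corresponds to the element $\iota(j_Y(x))$ under those maps. I would handle this by working at the level of the complex $\tau^{\le1}\Omega^\bullet_Y$: a log class factors through $\Omega^1_{\log}[-1]\to\tau^{\le1}\Omega^\bullet$, and its image in $H^1(\mathcal H^1)$ is visibly the Cartier-fixed class. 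Once commutativity is established, injectivity of $\operatorname{coker}(\alpha)\to\operatorname{coker}(\beta)$ is immediate from the injectivity in Proposition~\ref{prop: logarithmic description de Rham}.
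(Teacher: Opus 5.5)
Your proposal is correct and is essentially the paper's argument: you identify $\operatorname{coker}(\alpha)$ with $L\otimes_{\mathbf F_p}k$ through the mod $p$ reduction of the integral logarithmic class map, and you observe that the resulting map to $\operatorname{coker}(\beta)$ agrees with the injection of Proposition~\ref{prop: logarithmic description de Rham}. Your explicit check via $A_Y\to{}'E_2^{1,1}(Y)$ and the identity $C(j_Y(x))=\iota(j_Y(x))$, together with your remark that the two maps differ only by the Frobenius twist $x\otimes w\mapsto x\otimes w^{p}$, spells out the commutativity that the paper asserts in one line.
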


\begin{proof}
We must check that \eqref{eq: coker alpha logarithmic} identifies the
\emph{natural} map in the statement with the injective
map appearing in Proposition \ref{prop: logarithmic description de Rham}. The integral comparison is induced by
\[
\mathbf Z_p(1)\simeq W\Omega^1_{T,\log}[-1]
\longrightarrow\mathcal N_T
\xrightarrow{\iota}W\Omega_T^\bullet.
\]
Its reduction modulo $p$ is the class map
$\Omega^1_{T,\log}[-1]\to\Omega_T^\bullet$.
Consequently we have a commutative diagram
\[
\begin{tikzcd}[column sep=large]
{\operatorname{coker}\bigl(
H^1(X,\Omega^1_{X,\log})\to
H^1(Y,\Omega^1_{Y,\log})\bigr)\otimes_{\mathbf F_p}k}
& {\operatorname{coker}(\alpha)} \\
& {\operatorname{coker}(\beta)}
\arrow["\simeq",from=1-1,to=1-2]
\arrow[hookrightarrow,from=1-1,to=2-2]
\arrow[from=1-2,to=2-2]
\end{tikzcd}
\]
whose diagonal map is injective by
Proposition~\ref{prop: logarithmic description de Rham}.
The right-hand map is therefore injective.
\end{proof}
\begin{proof}[Proof of Theorem~\ref{main thm}]

 By
Proposition~\ref{prop: kernel comparison}, the map
$\ker(\alpha)\to\ker(\beta)$ is surjective. By
Lemma~\ref{lem: cokernel comparison}, the map
$\operatorname{coker}(\alpha)\to\operatorname{coker}(\beta)$ is injective. The snake lemma therefore gives
\[
\ker\bigl(H^3_{\mathrm{cris}}(X/W)[p]\to
H^3_{\mathrm{cris}}(Y/W)[p]\bigr)=0.
\]
Thus the graph correspondence is injective on the $p$-torsion. Applying the
same argument to the transpose correspondence gives an injection in the
opposite direction. Set
\[
T_X:=\operatorname{Tors}H^3_{\mathrm{cris}}(X/W),
\qquad
T_Y:=\operatorname{Tors}H^3_{\mathrm{cris}}(Y/W).
\]
These are finite-length $W$-modules. We claim that the graph correspondence
$T_X\to T_Y$ is injective. Suppose that $0\neq x$ belongs to its kernel and
choose $r\geq1$ minimal with $p^rx=0$. Then
$0\neq p^{r-1}x\in T_X[p]$ is also in the kernel, contradicting the
injectivity on $p$-torsion. Hence $T_X\to T_Y$ is injective. The transpose
correspondence similarly gives an injection $T_Y\to T_X$.

Since both modules have finite length,
\[
\operatorname{length}_W(T_X)\leq\operatorname{length}_W(T_Y)
\leq\operatorname{length}_W(T_X).
\]
The lengths are therefore equal, and both injections are isomorphisms. This
proves
\[
\operatorname{Tors}H^3_{\mathrm{cris}}(X/W)
\simeq
\operatorname{Tors}H^3_{\mathrm{cris}}(Y/W).
\]
\end{proof}

\appendix
\section{Commutative algebra proofs}

In this appendix, we collect some elementary arguments repeatedly used in the text.

\begin{lemma}\label{lemma: identification of kernel and cokernels}
Consider the following diagram of abelian groups with exact rows,
where $a_1$ and $d_1$ are isomorphisms and all squares commute
in both directions:
\[
\begin{tikzcd}
A_1 \arrow[r,"\ell_1"]
    \arrow[d,"a_1",shift left=2]
& B_1 \arrow[r,"m_1"]
      \arrow[d,"b_1",shift left=2]
& C_1 \arrow[r,"n_1"]
      \arrow[d,"c_1",shift left=2]
& D_1 \arrow[d,"d_1",shift left=2] \\
A_2 \arrow[r,"\ell_2"]
    \arrow[u,"a_1^{-1}",shift left=2]
& B_2 \arrow[r,"m_2"]
      \arrow[u,"b_2",shift left=2]
& C_2 \arrow[r,"n_2"]
      \arrow[u,"c_2",shift left=2]
& D_2 \arrow[u,"d_1^{-1}",shift left=2].
\end{tikzcd}
\]
Then the horizontal maps induce isomorphisms
\[
\ker(b_1)\simeq\ker(c_1),
\qquad
\operatorname{coker}(b_1)\simeq\operatorname{coker}(c_1).
\]
\end{lemma}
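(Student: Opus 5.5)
The plan is to exploit the fact that the transposed arrows give a genuine two-sided inverse on the outer columns, so $b_2$ and $c_2$ split off the kernels of $b_1$ and $c_1$. I will first show that $m_1$ restricts to an isomorphism $\ker(b_1)\to\ker(c_1)$, and then that $m_2$ induces an isomorphism $\operatorname{coker}(b_1)\to\operatorname{coker}(c_1)$. Commutativity in both directions means $b_1\ell_1=\ell_2a_1$, $c_1m_1=m_2b_1$, $d_1n_1=n_2c_1$, and $b_2\ell_2=\ell_1a_1^{-1}$, $c_2m_2=m_1b_2$, $d_1^{-1}n_2=n_1c_2$.

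\emph{Kernels.} The map $m_1$ sends $\ker b_1$ into $\ker c_1$ by the identity $c_1m_1=m_2b_1$.

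\begin{itemize}
\item \textbf{Injectivity.} Let $x\in\ker b_1$ with $m_1x=0$. Then $x=\ell_1a$ for some $a\in A_1$. Hence $0=b_1\ell_1a=\ell_2a_1a$. Applying $b_2$ gives $0=b_2\ell_2(a_1a)=\ell_1a_1^{-1}a_1a=\ell_1a=x$.
\item \textbf{Surjectivity.} Let $y\in\ker c_1$. Then $d_1n_1y=n_2c_1y=0$, and since $d_1$ is an isomorphism, $n_1y=0$. So $y=m_1x$ for some $x\in B_1$, and $m_2b_1x=c_1y=0$. Hence $b_1x=\ell_2a'$ for some $a'\in A_2$. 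Put $x'=x-\ell_1a_1^{-1}a'$. Then $m_1x'=y$ and $b_1x'=\ell_2a'-\ell_2a'=0$.
\end{itemize}

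\emph{Cokernels.} The map $m_2$ descends to a map $\operatorname{coker}b_1\to\operatorname{coker}c_1$, again by $c_1m_1=m_2b_1$.

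\begin{itemize}
\item \textbf{Injectivity.} Suppose $m_2u=c_1z$ with $u\in B_2$ and $z\in C_1$. Then $d_1n_1z=n_2c_1z=n_2m_2u=0$, so $n_1z=0$ and $z=m_1w$ for some $w\in B_1$. Thus $m_2(u-b_1w)=0$, so $u-b_1w=\ell_2a$ for some $a\in A_2$. Writing $a=a_1a''$ gives $\ell_2a=b_1\ell_1a''$. Hence $u\in\operatorname{im}b_1$.
\item \textbf{Surjectivity.} Let $v\in C_2$ and set $v'=v-c_1c_2v$, so that $v\equiv v'$ modulo $\operatorname{im}c_1$. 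It is cleaner to use $n_2$ directly, however. We have $n_2v=d_1n_1c_2v=n_2c_1c_2v$, so $n_2v'=0$ and $v'=m_2u$ for some $u\in B_2$. Thus $v$ is the image of $u$ modulo $\operatorname{im}c_1$.
\end{itemize}

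The main point requiring care is that surjectivity on cokernels uses the reverse square $d_1^{-1}n_2=n_1c_2$ rather than the injectivity of some map. Kernel injectivity similarly uses $b_2$. The remaining steps are standard chases using only exactness together with $a_1$ and $d_1$ being isomorphisms.
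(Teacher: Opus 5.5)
Your proof is correct and is essentially the paper's argument written out as an element chase. The paper passes to the images $\operatorname{im}\ell_i\subseteq B_i$, $\operatorname{im}m_i\subseteq C_i$, $\operatorname{im}n_i\subseteq D_i$, observes that $b_1$ and $d_1$ restrict to isomorphisms $\operatorname{im}\ell_1\to\operatorname{im}\ell_2$ and $\operatorname{im}n_1\to\operatorname{im}n_2$ (using $b_2$, $a_1^{-1}$ and the reverse square through $d_1^{-1}$, exactly the facts your four chases check), and then applies the snake lemma twice; like the paper, you never need the middle reverse square $c_2m_2=m_1b_2$.
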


\begin{proof}
Set $I_i=\operatorname{im}(\ell_i)$,
$J_i=\operatorname{im}(m_i)$, and
$Q_i=\operatorname{im}(n_i)$. The squares involving $a_1$ and $a_1^{-1}$ show that
$b_1:I_1\to I_2$ and $b_2:I_2\to I_1$ are inverse
isomorphisms. Likewise, the squares involving $d_1$ and
$d_1^{-1}$ show that $d_1:Q_1\to Q_2$ and
$d_1^{-1}:Q_2\to Q_1$ are inverse isomorphisms. Apply the snake lemma to the short exact sequences
$0\to I_i\to B_i\to J_i\to0$ and
$0\to J_i\to C_i\to Q_i\to0$ and conclude. 
\end{proof}
The following form of the snake lemma is used in both the Cartier
and Nygaard arguments.
\begin{lemma}\label{lemma: snake kernel cokernel}
Consider a commutative diagram of abelian groups with exact rows
\[
\begin{tikzcd}
0 \arrow[r] & A_1 \arrow[r] \arrow[d,"u"]
& B_1 \arrow[r,"d_1"] \arrow[d,"v"]
& D_1 \arrow[r] \arrow[d,"w"]
& E_1 \arrow[d,"z"] \\
0 \arrow[r] & A_2 \arrow[r]
& B_2 \arrow[r,"d_2"]
& D_2 \arrow[r]
& E_2 .
\end{tikzcd}
\]
Then $\ker(u)\simeq\ker(\ker(v)\to\ker(w))$.
If $\ker(v)\to\ker(w)$ is surjective, the natural map induces an injection
\[
\operatorname{coker}(u)\hookrightarrow
\ker\bigl(\operatorname{coker}(v)\to\operatorname{coker}(w)\bigr).
\]
If, in addition, $z$ is injective, this injection is an isomorphism.
\end{lemma}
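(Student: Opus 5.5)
This is a diagram chase, and I will treat the three assertions separately. Write $\bar v:\ker(v)\to\ker(w)$ for the restriction of $w$, and $\tilde v:\operatorname{coker}(v)\to\operatorname{coker}(w)$ for the map induced by $d_1,d_2$.

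For the kernel statement, I view the left part of each row as a left exact sequence $0\to A_i\to B_i\xrightarrow{d_i}D_i$. Then $A_i=\ker(d_i)$, and the injectivity of $A_i\to B_i$ identifies $\ker(u)$ with $\ker(v)\cap A_1$. This intersection is exactly the set of $b\in\ker(v)$ with $d_1b=0$, which is $\ker(\bar v)$. So the first claim needs only exactness at $A_i$ and $B_i$.

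For the cokernel, I first define the natural map $\operatorname{coker}(u)\to\operatorname{coker}(v)$ induced by $A_2\to B_2$. Its image lands in $\ker(\tilde v)$, because $d_2$ kills $A_2$.

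\emph{Injectivity.} Take $a_2\in A_2$ whose image in $B_2$ equals $v(b_1)$. Then $w(d_1b_1)=d_2v(b_1)=0$, so $d_1b_1\in\ker(w)$.
\begin{enumerate}
\item By surjectivity of $\bar v$, choose $b'\in\ker(v)$ with $d_1b'=d_1b_1$.
\item Then $b_1-b'\in\ker(d_1)=A_1$, say $b_1-b'=a_1$.
\item Since $v(b')=0$, we get $u(a_1)=a_2$, using injectivity of $A_2\to B_2$.
\end{enumerate}
This is the one step where the surjectivity hypothesis on $\ker(v)\to\ker(w)$ is needed.

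\emph{Surjectivity when $z$ is injective.} Take $b_2\in B_2$ whose class lies in $\ker(\tilde v)$, so that $d_2b_2=w(d_1)$ for some $d_1\in D_1$.
\begin{enumerate}
\item The image of $d_1$ in $E_1$ maps under $z$ to the image of $d_2b_2$ in $E_2$, which is $0$ by exactness.
\item Injectivity of $z$ gives that $d_1$ maps to $0$ in $E_1$. Exactness of the top row at $D_1$ then gives $d_1=d_1(b_1)$ for some $b_1\in B_1$.
\item Now $d_2(b_2-v(b_1))=0$, so $b_2-v(b_1)$ comes from $A_2$.
\end{enumerate}
Hence the class of $b_2$ in $\operatorname{coker}(v)$ is in the image of $\operatorname{coker}(u)$.

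None of these steps is a real obstacle. The only care needed is bookkeeping: checking which hypothesis each step uses, and that all maps are the natural ones induced by the horizontal arrows. This matters because the lemma is applied where those maps must be identified with specific comparison maps.
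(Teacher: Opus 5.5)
Your argument is correct and is essentially the paper's proof: the paper factors through $I_i=\operatorname{im}(d_i)$ and applies the snake lemma to $0\to A_i\to B_i\to I_i\to 0$ and to $0\to I_i\to D_i\to D_i/I_i\to 0$, and you have unpacked those two snake-lemma steps into an explicit element chase. There are two cosmetic slips: $\bar v$ should be the restriction of $d_1$ (not of $w$) to $\ker(v)$, and in the last part you reuse the symbol $d_1$ for an element of $D_1$.
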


\begin{proof}
Set $I_i=\operatorname{im}(d_i)$ and let $t:I_1\to I_2$ be the
induced map. Applying the snake lemma to
$0\to A_i\to B_i\to I_i\to0$ gives the kernel assertion and
\[
\ker(v)\longrightarrow\ker(t)\longrightarrow\operatorname{coker}(u)
\longrightarrow\operatorname{coker}(v)
\longrightarrow\operatorname{coker}(t)\longrightarrow0.
\]
Since $\ker(t)\subseteq\ker(w)$, the surjectivity hypothesis makes
the first arrow surjective. Thus $\operatorname{coker}(u)$ embeds in
$\operatorname{coker}(v)$ with image
$\ker(\operatorname{coker}(v)\to\operatorname{coker}(t))$.
This image is contained in the kernel stated in the lemma. If $z$ is injective, the induced map $D_1/I_1\to D_2/I_2$ is
injective, since $D_i/I_i$ embeds in $E_i$. A second application of
the snake lemma, to $0\to I_i\to D_i\to D_i/I_i\to0$, shows that
$\operatorname{coker}(t)\to\operatorname{coker}(w)$ is injective.
The two kernels therefore agree.
\end{proof}

We now turn to semilinear algebra. Throughout the remainder
of this appendix, let $k$ be an algebraically closed field
of characteristic $p>0$, write $W=W(k)$ and $W_n=W_n(k)$,
and let $\sigma$ denote the Witt-vector Frobenius.
All semilinear maps are taken with respect to $\sigma$.
We write $M^{\varphi=1}=\ker(1-\varphi)$.

\begin{lemma}\label{lemma: semilinear algebra finite length}
Let $n\geq1$, and let $M$ be a finite-length $W_n$-module
equipped with a semilinear map $\varphi:M\to M$.
Then the natural map
\[
M^{\varphi=1}\otimes_{\mathbf Z/p^n\mathbf Z}W_n
\longrightarrow M,\qquad m\otimes a\longmapsto am,
\]
is injective. If $\varphi$ is bijective, this map is an
isomorphism, and $1-\varphi:M\to M$ is surjective.
\end{lemma}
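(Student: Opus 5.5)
We will prove the statement by induction on $n$, reducing to the classical case $n=1$, where $M$ is a finite-dimensional $k$-vector space with a $\sigma$-semilinear endomorphism.

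\textbf{Injectivity.} Injectivity holds over any field, by the usual Artin-type argument. Suppose a nontrivial relation $\sum a_i m_i=0$ holds, with $m_i\in M^{\varphi=1}$ linearly independent over $\mathbf F_p$ and with the number of terms minimal. Normalize so that $a_1=1$. Applying $\varphi$ gives $\sum \sigma(a_i)m_i=0$, and subtracting the two relations gives a shorter relation $\sum(\sigma(a_i)-a_i)m_i=0$. By minimality every $a_i$ lies in $\mathbf F_p$, which contradicts independence.

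For general $n$, we would avoid a direct argument and use the filtration by $p^jM$ and the $p$-torsion. An element $x\in M^{\varphi=1}\otimes W_n$ in the kernel is handled as follows.

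\begin{enumerate}
\item First reduce modulo $p$ using the exact sequence $0\to M[p]\to M\to pM\to 0$. This sequence is $\varphi$-stable.
\item Use flatness of $W_n$ over $\mathbf Z/p^n$.
\end{enumerate}

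Flatness holds because $W_n$ is free over $\mathbf Z/p^n$; indeed $W$ is flat over $\mathbf Z_p$. So tensoring is exact, and a five-lemma or snake argument on $0\to M[p]^{\varphi=1}\to M^{\varphi=1}\to (pM)^{\varphi=1}$ propagates injectivity from $M[p]$, which is the $n=1$ case, and $pM$, which is a $W_{n-1}$-module, to $M$.

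\textbf{Bijective case.} Now assume $\varphi$ is bijective. For $n=1$ the key facts are Lang-type statements over algebraically closed $k$:
\begin{itemize}
\item $M^{\varphi=1}$ spans $M$ (the Dieudonné--Lang lemma);
\item $1-\varphi$ is surjective.
\end{itemize}
For surjectivity, choose a basis of fixed vectors. The equation $x-\varphi(x)=y$ then reduces coordinatewise to $a-a^p=b$, which is solvable since $k$ is algebraically closed. For spanning, I would cite the standard result, or sketch it as follows. The $\mathbf F_p$-points of the étale scheme defined by $\varphi(x)=x$ have the right cardinality $p^{\dim M}$, by counting solutions of a system of the form $x=A\sigma(x)$ with $A$ invertible; this system is a finite étale algebra of degree $p^{\dim M}$.

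For the induction, use $0\to pM\to M\to M/pM\to0$. This sequence is $\varphi$-stable, and $\varphi$ is bijective on both ends because $\varphi$ is bijective on $M$ and preserves $pM$. Surjectivity of $1-\varphi$ on $M$ follows from surjectivity on the outer terms by the snake lemma. The same snake sequence gives exactness of $0\to(pM)^{\varphi=1}\to M^{\varphi=1}\to (M/pM)^{\varphi=1}\to 0$, where the right map is surjective because $1-\varphi$ is surjective on $pM$. Tensoring with the flat $W_n$ and applying the five lemma to the comparison maps, which are isomorphisms on the ends by induction, gives the isomorphism for $M$.

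\textbf{Main obstacle.} The main obstacle is the base case spanning statement, i.e. producing enough fixed vectors. I expect to treat it via the finite étale scheme count above, or simply by quoting the classical Lang/Katz result.
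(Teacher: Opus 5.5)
Your proposal is correct and follows essentially the paper's route: the Artin independence argument and a Lang-type statement for $n=1$, then d\'evissage along $pM\subset M$ using flatness of $W_n$ over $\mathbf Z/p^n$, with surjectivity of $1-\varphi$ on $pM$ providing fixed lifts of fixed vectors of $M/pM$. The differences are minor. The paper proves injectivity for general $n$ without induction: a nonzero kernel would contain nonzero $p$-torsion, which lies in $(M[p])^{\varphi=1}\otimes_{\mathbf F_p}k$. It then obtains surjectivity of the comparison map by Nakayama rather than by your five lemma. Also, in your base-case sketch the solutions of $x=A\sigma(x)$ should be counted as $k$-points, not $\mathbf F_p$-points.
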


\begin{proof}
We first recall the case $n=1$. Fixed vectors that are
linearly independent over $\mathbf F_p$ are linearly
independent over $k$. Indeed, in a dependence relation
of minimal length, normalize one coefficient to $1$.
Applying $\varphi$ and subtracting the original relation
shows that all coefficients lie in $\mathbf F_p$,
a contradiction.

If $\varphi$ is bijective, then $1-\varphi$ is surjective
and its kernel has $\mathbf F_p$-dimension $\dim_k M$;
see \cite[Tag~0A3L]{Stacks}.
Together with the preceding independence argument,
this proves the lemma for $n=1$.

For general $n$, let $N=M^{\varphi=1}$ and denote the
natural map by $\lambda$. Since $W$ is torsion-free,
it is flat over $\mathbf Z_p$, and hence $W_n$ is flat
over $\mathbf Z/p^n\mathbf Z$.
Writing $M[p]=\ker(p:M\to M)$, flatness identifies the
$p$-torsion of $N\otimes_{\mathbf Z/p^n\mathbf Z}W_n$
with $(M[p])^{\varphi=1}\otimes_{\mathbf F_p}k$.
The restriction of $\lambda$ to this submodule is
injective by the case $n=1$. Every nonzero module
annihilated by $p^n$ contains nonzero $p$-torsion,
so $\lambda$ itself is injective.

Suppose now that $\varphi$ is bijective. It induces a
bijective semilinear map on each $p^jM/p^{j+1}M$.
The vector-space case and induction along this finite
filtration show that $1-\varphi$ is surjective on both
$M$ and $pM$. The latter implies that every fixed
vector in $M/pM$ lifts to a fixed vector in $M$.
Choose such lifts of a fixed basis of $M/pM$.
They generate $M$ by Nakayama's lemma, proving that
$\lambda$ is surjective.
\end{proof}

\begin{lemma}\label{lemma: semilinear algebra over W}
Let $M$ be a finite free $W$-module equipped with a
semilinear isomorphism $\varphi:M\to M$.
Then there is a short exact sequence
\[
0\longrightarrow M^{\varphi=1}
\longrightarrow M
\xrightarrow{1-\varphi}M
\longrightarrow0,
\]
and the natural map
$M^{\varphi=1}\otimes_{\mathbf Z_p}W\to M$
is an isomorphism.
\end{lemma}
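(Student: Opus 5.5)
The plan is to reduce to the finite-length case treated in Lemma~\ref{lemma: semilinear algebra finite length} by passing to the quotients $M_n:=M/p^nM$ and then taking inverse limits. Since $M$ is finite free over $W$ and $\varphi$ is a semilinear bijection, $\varphi$ preserves $p^nM$, because $\sigma(p)=p$. Hence $\varphi$ induces a bijective semilinear map on each $M_n$, which is a finite-length $W_n$-module.

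For each $n$, Lemma~\ref{lemma: semilinear algebra finite length} then gives the following facts:
\begin{itemize}
\item $1-\varphi\colon M_n\to M_n$ is surjective;
\item the natural map $M_n^{\varphi=1}\otimes_{\mathbf Z/p^n}W_n\to M_n$ is an isomorphism.
\end{itemize}
From the second fact, $M_n^{\varphi=1}$ is free over $\mathbf Z/p^n$ of rank $r:=\operatorname{rank}_W M$. This follows because $W_n$ is faithfully flat over $\mathbf Z/p^n$ and $M_n\simeq W_n^r$, so a length count together with flatness forces freeness; alternatively one compares $p$-torsion layers. The transition maps $M_{n+1}^{\varphi=1}\to M_n^{\varphi=1}$ are surjective. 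To see this, take a fixed $\bar m\in M_n$ and lift it to $m\in M_{n+1}$. Then $(1-\varphi)m$ lies in $p^nM_{n+1}$. Applying the surjectivity of $1-\varphi$ on $p^nM_{n+1}\simeq M_1$ (the case $n=1$ of the lemma), we correct $m$ by an element of $p^nM_{n+1}$ to make it fixed.

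Now pass to the limit. Since $M=\varprojlim M_n$, we get $M^{\varphi=1}=\varprojlim M_n^{\varphi=1}$, which is a limit of free $\mathbf Z/p^n$-modules of rank $r$ along surjections. Hence $M^{\varphi=1}$ is free of rank $r$ over $\mathbf Z_p$, and $M^{\varphi=1}/p^n\simeq M_n^{\varphi=1}$. The map $\lambda\colon M^{\varphi=1}\otimes_{\mathbf Z_p}W\to M$ is a map of finite free $W$-modules of the same rank whose reduction modulo $p$ is the isomorphism of the case $n=1$. By Nakayama it is surjective, and a surjection between free modules of equal rank is an isomorphism.

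It remains to prove exactness of $0\to M^{\varphi=1}\to M\xrightarrow{1-\varphi}M\to 0$; only surjectivity needs an argument. Given $y\in M$, the sets $S_n:=\{x\in M_n : (1-\varphi)x=\bar y\}$ are nonempty torsors under $M_n^{\varphi=1}$. The maps $S_{n+1}\to S_n$ are surjective, by the same lifting argument as before or by the surjectivity of $M_{n+1}^{\varphi=1}\to M_n^{\varphi=1}$ on the torsor differences. So we can choose a compatible system in $\varprojlim S_n$, which gives $x\in M$ with $(1-\varphi)x=y$. Alternatively, after the isomorphism $\lambda$ we may write $M=N\otimes_{\mathbf Z_p}W$ with $\varphi=1\otimes\sigma$. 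Then surjectivity reduces to surjectivity of $1-\sigma$ on $W$, which holds since $k$ is algebraically closed, using Artin--Schreier on residues and successive approximation.

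The main obstacle is the lifting step, that is, the surjectivity of the transition maps on fixed points. It is what makes both the freeness of $M^{\varphi=1}$ and the Mittag-Leffler argument work, and it rests on the surjectivity of $1-\varphi$ on $p^nM_{n+1}$ supplied by the finite-length lemma.
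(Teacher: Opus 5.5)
Your argument is correct. It uses the same two ingredients as the paper: surjectivity of $1-\varphi$ on the layers $p^jM/p^{j+1}M$ from Lemma~\ref{lemma: semilinear algebra finite length}, and correcting a lift by an element of $p^jM$ to make it fixed. The difference is in how these are assembled.

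The paper runs successive approximation once, directly on $M$. This gives surjectivity of $1-\varphi$ on $M$ and on $pM$, and hence exactness, at the start. It then lifts a fixed $k$-basis of $M/pM$ to fixed vectors $e_1,\dots,e_r\in M$, which form a $W$-basis by Nakayama. Finally, $M^{\varphi=1}=\bigoplus\mathbf Z_p e_i$ because $W^{\sigma=1}=\mathbf Z_p$.

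You instead build $M^{\varphi=1}$ as $\varprojlim_n (M/p^nM)^{\varphi=1}$. This costs some extra bookkeeping:
\begin{enumerate}
\item Each $(M/p^nM)^{\varphi=1}$ must be free of rank $r$ over $\mathbf Z/p^n$. Your faithful-flatness/length-count justification is right but only sketched; one also needs finite generation to descend along $\mathbf Z/p^n\to W_n$, or the fact that bounded $p$-groups are direct sums of cyclic groups.
\item The transition maps on fixed points must be surjective.
\item Surjectivity of $1-\varphi$ comes last, via a torsor/Mittag-Leffler argument or by transport to $1-\sigma$ on $W$.
\end{enumerate}
In exchange, you get the compatibility $M^{\varphi=1}/p^n\simeq (M/p^nM)^{\varphi=1}$ for free. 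The identification of the fixed module also comes out of the finite-level isomorphisms rather than from $W^{\sigma=1}=\mathbf Z_p$. The paper's route is shorter and avoids inverse limits of fixed-point modules altogether.
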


\begin{proof}
By Lemma \ref{lemma: semilinear algebra finite length},
$1-\varphi$ is surjective on each
$p^jM/p^{j+1}M$. Successive approximation and
$p$-adic completeness therefore show that it is
surjective on $M$ and on $pM$: a solution modulo $p^j$
can be corrected by an element of $p^jM$ to obtain
a solution modulo $p^{j+1}$.
This proves exactness and shows that every fixed
vector modulo $p$ lifts to a fixed vector in $M$.

Choose fixed lifts $e_1,\ldots,e_r$ of a fixed
$k$-basis of $M/pM$. They form a $W$-basis of $M$
by Nakayama's lemma. Since $W^{\sigma=1}=\mathbf Z_p$,
we have $M^{\varphi=1}=\bigoplus_{i=1}^r\mathbf Z_p e_i$.
The asserted isomorphism follows.
\end{proof}

The same conclusions hold for finitely generated
$W$-modules. To prove this, we use the torsion
submodule, which is preserved by $\varphi$.

\begin{lemma}\label{lemma: semilinear algebra over W torsion plus torsion free}
Let $M$ be a finitely generated $W$-module equipped
with a semilinear isomorphism $\varphi:M\to M$.
Then there is a short exact sequence
\[
0\longrightarrow M^{\varphi=1}
\longrightarrow M
\xrightarrow{1-\varphi}M
\longrightarrow0,
\]
and the natural map
$M^{\varphi=1}\otimes_{\mathbf Z_p}W\to M$
is an isomorphism.
\end{lemma}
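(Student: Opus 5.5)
The plan is to split $M$ along its torsion submodule and combine the two previous lemmas. Let $T=M_{\mathrm{tors}}$. Since $\varphi$ is additive and $\sigma$-semilinear, $p^m x=0$ implies $p^m\varphi(x)=0$, so $\varphi(T)\subseteq T$. The same holds for $\varphi^{-1}$, which is $\sigma^{-1}$-semilinear. Hence $\varphi$ restricts to a semilinear bijection of $T$ and induces a semilinear bijection $\bar\varphi$ of $F=M/T$. The module $F$ is finite free over $W$, and $T$ is a finite-length module killed by some $p^n$, so $T$ is a $W_n$-module. Lemma~\ref{lemma: semilinear algebra finite length} applied to $T$ therefore gives $T^{\varphi=1}\otimes_{\mathbf Z/p^n}W_n\simeq T$ and the surjectivity of $1-\varphi$ on $T$. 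Lemma~\ref{lemma: semilinear algebra over W} applied to $F$ gives the short exact sequence for $F$ and the isomorphism $F^{\bar\varphi=1}\otimes_{\mathbf Z_p}W\simeq F$.

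Next, apply the snake lemma to the endomorphism $1-\varphi$ of the short exact sequence $0\to T\to M\to F\to 0$. The cokernel on $T$ is zero, so the exact sequence
\[
0\to T^{\varphi=1}\to M^{\varphi=1}\to F^{\bar\varphi=1}\to \operatorname{coker}(1-\varphi|_T)=0\to\operatorname{coker}(1-\varphi|_M)\to\operatorname{coker}(1-\bar\varphi)=0
\]
shows that $1-\varphi$ is surjective on $M$. This is the asserted short exact sequence. It also shows that $0\to T^{\varphi=1}\to M^{\varphi=1}\to F^{\bar\varphi=1}\to 0$ is exact.

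Now tensor this last sequence with $W$ over $\mathbf Z_p$. Since $W$ is torsion-free over the DVR $\mathbf Z_p$, it is flat, so the tensored sequence stays exact. It maps to $0\to T\to M\to F\to0$ via $m\otimes w\mapsto wm$. The right vertical map is the isomorphism from Lemma~\ref{lemma: semilinear algebra over W}. By the five lemma, it remains to show that the left vertical map $T^{\varphi=1}\otimes_{\mathbf Z_p}W\to T$ is an isomorphism.

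The only real subtlety is reconciling the two forms of the torsion statement. The group $T^{\varphi=1}$ is killed by $p^n$, so
\[
T^{\varphi=1}\otimes_{\mathbf Z_p}W=T^{\varphi=1}\otimes_{\mathbf Z/p^n}(W/p^n)=T^{\varphi=1}\otimes_{\mathbf Z/p^n}W_n.
\]
The left vertical map is therefore exactly the map of Lemma~\ref{lemma: semilinear algebra finite length}, which is an isomorphism because $\varphi|_T$ is bijective. The five lemma then gives the isomorphism $M^{\varphi=1}\otimes_{\mathbf Z_p}W\simeq M$.
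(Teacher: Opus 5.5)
Your proposal is correct and follows the paper's proof essentially verbatim: split off the torsion submodule $T$, apply the two preceding lemmas to $T$ and $M/T$, use the snake lemma for $1-\varphi$ to get surjectivity and the exact sequence of fixed points, then tensor with the flat $\mathbf Z_p$-module $W$ and compare via the five lemma. Your extra checks fill in details the paper leaves implicit: that $\varphi^{-1}$ also preserves $T$, so $\varphi|_T$ and $\bar\varphi$ are bijective, and that $T^{\varphi=1}\otimes_{\mathbf Z_p}W=T^{\varphi=1}\otimes_{\mathbf Z/p^n}W_n$.
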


\begin{proof}
Let $T$ be the torsion submodule of $M$, and set
$Q=M/T$. Then $T$ is a finite-length $W_n$-module
for some $n\geq1$, and $Q$ is finite free over $W$.
Both inherit semilinear isomorphisms from $\varphi$.

By the preceding two lemmas, $1-\varphi$ is
surjective on $T$ and $Q$, and hence on $M$.
The snake lemma also gives the exact sequence
$0\to T^{\varphi=1}\to M^{\varphi=1}
\to Q^{\varphi=1}\to0$.

Tensor this sequence with $W$ over $\mathbf Z_p$,
which preserves exactness, and compare it with
$0\to T\to M\to Q\to0$.
The natural maps on the two ends are isomorphisms
by the preceding lemmas. Therefore the middle map
is an isomorphism as well.
\end{proof}

\bibliographystyle{plain}
\bibliography{biblioteca}
\end{document}